\documentclass{amsart}

\usepackage{graphicx}
\usepackage{cite}
\usepackage{verbatim}
\usepackage{psfrag}
\usepackage{amssymb}
\usepackage{amsthm}
\usepackage{amsmath}
\usepackage{mathrsfs}
\usepackage{pstricks,pst-node}
\usepackage{longtable}
\usepackage{array}
\usepackage{ragged2e}
\usepackage{hyperref}
\usepackage{enumitem}
\usepackage{crossreftools}
\input xy
\xyoption{all}
\swapnumbers
\newtheorem{theorem}[subsection]{Theorem}

\newtheorem{corollary}[subsection]{Corollary}

\makeatletter
\newcommand{\optionaldesc}[2]{%
  \phantomsection
  #1\protected@edef\@currentlabel{#1}\label{#2}%
}
\makeatother

\theoremstyle{definition}
\newtheorem{definition}[subsection]{Definition}

\newtheorem{example}[subsection]{Example}

\usepackage{tikz}
\usetikzlibrary{calc}
\usetikzlibrary{matrix}
\usetikzlibrary{arrows.meta}

\newcommand{\RR}{\mathbb R}
\newcommand{\Z}{\mathbb{Z}}
\newcommand{\Q}{\mathbb{Q}}
\newcommand{\QQ}{\mathbb{Q}}
\newcommand{\F}{\mathbb{F}}
\newcommand{\bF}{\mathbf{F}}
\newcommand{\n}{\mathbf{n}}
\DeclareMathOperator{\Aut}{Aut}
\newcommand{\sets}[1]{[\![#1]\!]}
\newcommand{\fps}[1]{[\![#1]\!]}
\DeclareMathOperator{\init}{int}
\DeclareMathOperator{\final}{fin}
\newcommand{\ip}[1]{\bigl \langle #1 \bigr \rangle}
\newcommand{\ipm}[2]{\bigl \langle #2 \bigr \rangle_{#1}}
\newcommand{\ipbw}[1]{\ipm{bw}{#1}}

\newcommand{\fullW}{\widetilde{W}}

\newcommand{\fullXi}{\widetilde{\Xi }}

\def\sA{{\mathscr A}}
\def\sB{{\mathscr B}}
\def\sC{{\mathscr C}}
\def\sG{{\mathscr G}}
\def\sL{{\mathscr L}}
\def\sW{{\mathscr W}}
\def\sX{{\mathscr X}}
\def\sS{{\mathscr S}}
\def\sT{{\mathscr T}}
\def\sZ{{\mathscr Z}}

             \def \bF {{\bf F}}

\def\p{\varpi}

\def\F{\mathbb{F}}

\def\R{\mathbb{R}}

\def\Q{\mathbb{Q}}
\def\Z{\mathbb{Z}}

\makeatletter
\@namedef{subjclassname@2020}{\textup{2020} Mathematics Subject Classification}
\makeatother

\begin{document}

\title[Black-white graphs]
{Black-white polynomials of graphs and generating functions}

\author{Kenneth Goodenough}
\address{Naturwissenschaftlich-Technische Fakult{\"a}t, Universit{\"a}t Siegen, Walter-Flex-Stra{\ss}e 3, 57068 Siegen, Germany}

\author{Paul E. Gunnells}
\address{Department of Mathematics and Statistics, University of
Massachusetts Amherst, Amherst, MA 01003, USA}

\keywords{Generating functions, graph polynomials, Feynman diagrams,
quantum information, quantum networks}

\subjclass[2020]{05A15, 05C31, 81T18, 81P40, 81P45}

\thanks{KG acknowledges support from the Alexander von
Humboldt Foundation. PEG thanks the Simons Foundation for support.}

\begin{abstract}
Let $G$ be a graph.  The \emph{black-white polynomial} $W_G(t)$
enumerates colorings of the vertices of $G$ with two colors (black and
white), where the power of $t$ keeps track of how many white vertices
have an even number of black neighbors.  Such polynomials appear in
quantum information theory, where they are used to capture properties
of the entanglement in certain quantum states described by graphs. In
this paper we describe how to use generating functions to compute
these polynomials for various families $\sG$ of graphs.  Our main
results are the following: (i) we describe some constructions under
which $\sG$ leads to a rational generating function; (ii) we use a
matrix model to construct the exponential generating function of the
black-white polynomials of all graphs; and (iii) we generalize a
construction of Wright to build exponential generating functions of
black-white polynomials for graphs of a given loop number.
\end{abstract}

\maketitle

\section{Introduction}\label{s:intro}

\subsection{} Let $G$ be a graph with vertex set $V = V(G)$ and edge
set $E = E (G)$.  A \emph{black-white coloring} of $G$, or simply a
\emph{coloring}, is a function $c\colon V \rightarrow \Z /2\Z$; here
we call $0\in \Z /2\Z$ (respectively $1$) \emph{white}
(resp.~\emph{black}).  Note that a black-white coloring is not a
coloring in the usual sense of graph theory, since we do not require
that adjacent vertices receive distinct colors.  We write $\sC =
\sC(G)$ for the set of colorings of $G$; clearly $|\sC|= 2^{|G|}$.

\subsection{}
Suppose that $G$ has a coloring $c$, and let $v\in V$. We say the
vertex $v$ is \emph{admissible} if either of the following two
conditions hold:
\begin{enumerate}
\item We have $c (v) = 1$.
\item We have $c (v) = 0$, and the cardinality of the finite set
\[
\bigl\{ w \bigm| \text{$c (w)=1$ and $\{v,w\} \in E$} \bigr\}
\]
is $0\bmod 2$.
\end{enumerate}
In other words any black vertex is admissible, but a white vertex is
admissible only if it has an even number of neighboring black vertices.
We denote by $\alpha (c)\subset V$ the subset of
admissible vertices for the coloring $c$.

\begin{definition}\label{def:defofW}
The \emph{black-white polynomial} $W_{G} (t)\in \Z [t]$ of $G$ is
defined by 
\begin{equation}\label{eq:defofW}
W_{G} (t) = \sum_{c\in \sC (G)} \prod_{v\in \alpha (c)} t.
\end{equation}
More generally, if $H\subset G$ is a subgraph equipped with a fixed
coloring $c (H)$, the the \emph{restricted black-white polynomial}
$W_{G,H,c (H)} (t)$ is defined by 
\begin{equation}\label{eq:defofrW}
W_{G,H,c (H)} (t) = \sum_{\substack{c\in \sC (G)\\
c|_{H} = c (H)}} \prod_{v\in \alpha (c)} t.
\end{equation}
We extend the definition of $W$ to the empty graph by putting
$W_{\emptyset} = 1$.
\end{definition}

\subsection{} The original motivation for these polynomials comes from
quantum information theory, in particular in the study of
\emph{stabilizer states}. Stabilizer states can be interpreted as
self-dual codes over the finite field $\F_4$ with respect to the
Hermitian trace inner product \cite{danielsen2006classification}. Such
codes can be represented by a graph, and it was shown in
\cite{miller2023shor} that the weight enumerator polynomial of such a
self-dual code corresponds---up to a simple reparameterization---to
the black-white polynomial of the corresponding graph.

\subsection{} The goal of this paper is to compute the polynomials
$W_{G} (t)$ for various families of graphs using generating functions.
We begin by discussing our first application.

Let $\sG$ be a countable family of graphs.  We assume that $\sG$ is a
disjoint union 
\[
\sG = \coprod_{n\geq 1} \sG_{n}
\]
and that each $\sG_{n}$ is a finite set of finite order graphs.  In
some examples $\sG_{n}$ will consist of a single graph, sometimes of
order $n$, but this is not required.(\footnote{The empty graph is by
convention the unique graph of order $0$, and for convenience we will
extend the notation in graph families by putting $\sG_{0} =
\{\emptyset \}$.})  We can form the ordinary generating function $\Xi
_{\sG}$ of this family by
\[
\Xi _{\sG}  = \sum_{n\geq 1} \sum_{G\in \sG_{n}} W_{G} (t)x^{n}.
\]
Note that $\Xi_{\sG }$ is a power series in $x$ with coefficients that
are polynomials in $t$.

We say that $\sG$ is a \emph{rational family} if this generating
function is a rational function in $x$ (with coefficients polynomials
in $t$).  As an example, let $\sG$ be the family of path graphs 
\[
\sG_{n} = \{ P_{n} \}, \quad n\geq 0.
\]
In particular $P_n$ is the unique tree of order $n$ with all vertices
having degree $1$ or $2$; we define $P_{0}$ to be the empty graph.  Then we have
\[
\Xi _{\sG} (x) = 1 + (t+1)x + (t^{2}+3)x^{2} + (t^{3}+3t+4)x^{3} +
(t^{4}+2t^{2}+8t+5)x^{4}+\dotsb
\]
and one can show (Example \ref{ex:pathgraph}) that this family is rational:
\begin{equation}\label{eq:pgfn}
\Xi_{\sG} (x) = \frac{1+ (2-2t)x^{2}}{1- (t+1)x + (t^{2}-1)x^{3}}.
\end{equation}
An expression like \eqref{eq:pgfn} makes it possible to compute $W_{G}
(t)$ for large $G$ in the family very quickly.  For instance for the
path graph $P_{100}$ we have
\begin{multline*}
W_{P_{100}} (t) = t^{100}
 + 2t^{98}
 + 102t^{97}
 + 202t^{96}
+ \dotsb +  520112162534040819904t^3 \\
 + 39376811249644776236t^2
 + 1945884509713393346t
 + 47086698309271007,
\end{multline*}
a computation that just takes a few milliseconds.  Our first main
result (Theorem \ref{thm:ratfamilies}) gives several examples of
constructions leading to rational families.  Many related graph
families have been studied in quantum information
theory~\cite{miller2023shor, eltschka2020maximum,
cao2024quantum}.

\subsection{} Our next results concern exponential generating
functions built from the polynomials $W_{G} (t)$ for various families
$\sG$.  As is well known, such generating functions correspond to
counting labeled objects instead of unlabeled objects, or equivalently
to counting unlabeled objects weighted by the inverses of the orders
of their automorphism groups (cf.~\cite{fs}).  More precisely, we
consider generating functions of the form
\[
\Lambda_{\sG} (x) = \sum_{n} \sum_{G\in \sG_{n}} \frac{W_{G}
(t)}{|\Aut G|}x^{n}.
\]
Note that the coefficients of this series lie in $\Q [t]$, not $\Z[t]$.  More generally, we will also
allow families $\sG$ with $|\sG_{n}|$ \emph{infinite} by introducing
additional variables (sometimes countably many) to mark various
features of the graphs in $\sG$.  These generating functions will be
infinite series in $x$ with coefficients elements of the rational
polynomial ring in several (sometimes countably many) variables.

Typically for such examples these generating functions are not
rational.  Indeed in most cases there is no hope of getting a simple
expression for them; one simply accepts them for what they are.  Nevertheless constructing them by various means is important for numerical investigations or for trying to build examples.

\subsection{} There are two main applications we give.  For both of
them, we allow graphs to have loops and parallel sets of edges.  This
entails slight (but obvious) modifications to the definition of $W_{G}
(t)$.  It is essential that we allow
this generality because of the way the methods work.

In the first application, we let $\sG$ be the set of \emph{all}
connected graphs up to isomorphism, and let $\sG_{n}$ be the subset of
graphs of order $n$. Note that since we allow parallel edges and loops
the subset $\sG_{n}$ is infinite.  Therefore we introduce additional
indexing data $\n $ (graph profiles) so that the corresponding subsets
$\sG_{c}(\n)$ are finite.  We then explain in Theorem \ref{thm:genfunallW} how to compute the generating
function
\[
\sB (u) \sum_{\n } u^{|\n |} \sum_{G \in \sG_c (\n
)}\frac{W_{G} (t)\prod_{k} \xi_{k}^{n_{k}}}{| \Aut G|},
\]
where the $\xi_{k}$ are additional marking variables keeping track of
the profile $\n$.  The basic tool is the technique of
Feynman diagrams from perturbative quantum field theory.  The method is
quite flexible, and allows one to build series enumerating graphs with various
restrictions, such as limitations on vertex degrees and topological
complexity. 

\subsection{}
In the second application, we generalize a method of Wright, who built
generating functions for graphs of fixed \emph{loop number} in terms
of certain tree functions \cite{wright1,dyz,jklp}; the results can be found in Theorems \ref{thm:loopnumber0}, \ref{thm:loopnumber1}, and \ref{thm:loopnumber2}. By definition, the loop number $g$ of a
connected graph is its first Betti number (in other words the rank of
its first homology group $H_{1} (G; \Z)$; one can also compute it
using the Euler characteristic as $g=1-|V|+|E|$).  Thus graphs of loop
number $0$ are trees, those of loop number $1$ are cycles with trees
grafted onto the vertices, and so on.  As part of proving these results, we introduce a generalization of the black-white polynomial we call the \emph{full $W$-polynomial}, and show that it can be computed in rational families (Theorem \ref{thm:rffull}).

\subsection{} We finish this introduction with a few remarks.
First, we define various rational families of graphs, but our list is
incomplete.  In fact we don't know a classification of all possible
examples.  We only give a few that seem interesting to illustrate the
method. For more work in this direction, we refer to \cite{cspaper}.

Second, although all the generating functions we consider are really
formal power series, one of the main reasons one introduces generating
functions is to study them analytically to obtain information about
their coefficients.  In particular one can hope to understand the
distribution of the coefficients of the polynomials $W_{G} (t)$ in a
given family. For example, we have observed that for generic families
of graphs, $W_G(t)$ converges to a binomial distribution
$\textrm{binom}(n, 1/4)$ after normalization. Understanding the
asymptotic behavior of the $W_G(t)$ leads to an understanding of the
entanglement of the quantum state associated to $G$, see
\cite{cspaper} for further background.  We will leave such
considerations to a future paper.

Third, there are variations of the definition of $W_{G} (t)$ in which
the finite field $\Z/{2}\Z$ is replaced with the ring $\Z /n\Z$.  For
example see \cite{miller2023shor, bahramgiri2007efficient}.  All our
results can be adapted to this setting, but we leave this
generalization to the reader.

\subsection{Acknowledgments} We thank Elo\"ic Vall\'ee for helpful
conversations.

\section{Rational families}\label{s:rationalfamilies}

\subsection{}
As in 
\S \ref{s:intro} let $\sG = \{\sG_{n} \}_{n\geq 1}$ be a family of graphs with
corresponding generating function
\[
\Xi _{\sG} (x) = \sum_{n\geq 1} \sum_{G\in \sG_{n}} W_{G} (t)x^{n}.
\]
Recall that $\sG$ is a \emph{rational family} if this generating
function is a rational function in $x$ (with coefficients polynomials
in $t$).  The main result of this section is Theorem
\ref{thm:ratfamilies}, which proves that certain graph families are
rational.  We begin with some notation.

First we recall that $P_{n}$ is the path graph on $n$ vertices. We let
$C_{n}$ denote the cycle graph on $n$ vertices.  In both cases we
identify the vertices with $\sets{n} = \{1,\dots ,n \}$. If $H$ is a
subgraph of $G$, we write $i_{H}\colon H \rightarrow G$ for the
inclusion.  For graphs $G$, $G'$, the \emph{product} $G \times G'$ is
the graph with vertex set $V (G)\times V (G')$ and with $(u,u')$
adjacent to $(v,v')$ if either (i) $u$ is adjacent to $v$ in $G$ or
(ii) $u'$ is adjacent to $v'$ in $G'$(\footnote{This is sometimes
called the Cartesian product or the box product.  We remark that it is
not the same as the product in the categorical sense (i.e., it does
not satisfy the universal property of a product).}). For example
$P_{2}\times P_{2} \simeq C_{4}$.

\subsection{}
We now define some families of graphs.  In each case, we fix a graph
$G$ and indicate the
set $\sG_{n}$. 

\begin{enumerate}
\item [\optionaldesc{(GC)}{it:cylinder}] The \emph{$G$-cylinder of length $n$} is the product $G \times
P_{n}$. We let $\sG_{n} = \{G\times P_{n} \}$.
\item [\optionaldesc{(GX)}{it:extrusion}] More generally let $i_{H} \colon H\rightarrow G$ be the
inclusion of $H$ as a subgraph.  Then the \emph{length $n$ extrusion
of $G$ along $H$} is the quotient of 
\[
X_{n} (G,H) = G \coprod (H
\times P_{n}),
\]
where we identify $i_{H} (H)\subset G$ with $H \times
{1}$.  Note that \ref{it:cylinder} is the special case $G = H$.  We
put $\sG_{n} = \{X_{n} (G,H) \}$.
\item [\optionaldesc{(GT)}{it:torus}] The \emph{$G$-torus of length $n$} is the product $G \times
C_{n}$. We put $\sG_{n} = \{G\times C_{n} \}$.
\item [\optionaldesc{(GE)}{it:earring}] More generally if $H$ is a subgraph of $G$, the \emph{length
$n$ earring of $G$ along $H$} is the quotient $E_{n} (G,H) = G \coprod
H \times C_{n}$ where we identify $i_{H} (H)\subset G$ with $H \times
{1}$.  Again \ref{it:torus} corresponds to $G = H$, and we put
$\sG_{n} = \{E_{n} (G,H) \}$. 
\item [\optionaldesc{(GS1)}{it:oneedge}] Let $G$ be a fixed graph and let $e\in E (G)$ be a fixed edge.
For any $n$ let $S_{n} (G,e)$ be the subdivision of $G$ obtained by
adding $n$ new vertices to $e$.  We put $\sG_{n}  = \{S_{n} (G,e)
\}$. 
\item [\optionaldesc{(GSm)}{it:lastone}] More generally let $F = \{e_{1},\dotsc ,e_{m} \}\subset E (G)$ be a fixed
collection of edges.  We let $\sG_{n}$ be the set of graphs obtained
by arbitrarily subdividing the edges in $F$ by adding $n$ new vertices
in total. In other words we put 
\begin{equation}\label{eq:subdividefamily}
\sG_{n} = \bigcup_{n_{1},\dotsc ,n_{m}} \bigcup_{k=1}^{m} S_{n_{k}}( G,e_{k}),
\end{equation}
where the first union is taken over all $n_{k}\geq 0$ with
$\sum {n_{k}} = n$.
\end{enumerate}

\begin{figure}
\centering
\begin{tikzpicture}[
    v/.style={circle, fill=black, draw=black, line width=0.7pt, inner sep=1.6pt},
    eG/.style={draw=black, line width=0.8pt},
    eP/.style={draw=blue!80!black!80, line width=1.0pt, draw opacity=0.65},
    lab/.style={font=\small}
]
\pgfdeclarelayer{bg}
\pgfdeclarelayer{fg}
\pgfsetlayers{bg,main,fg}

\def\r{0.65}
\def\rot{15}
\def\aA{90}
\def\aB{210}
\def\aC{330}

\begin{scope}[shift={(0,0)}]
  \begin{scope}[rotate=\rot]
    \coordinate (n1-1) at (\aA:\r);
    \coordinate (n1-2) at (\aB:\r);
    \coordinate (n1-3) at (\aC:\r);

    \begin{pgfonlayer}{bg}
      \draw[eG] (n1-1) -- (n1-2);
      \draw[eG] (n1-2) -- (n1-3);
    \end{pgfonlayer}

    \begin{pgfonlayer}{fg}
      \foreach \p in {n1-1,n1-2,n1-3}{\node[v] at (\p) {};}
    \end{pgfonlayer}
  \end{scope}
\end{scope}
\node[lab] at (0,-1.25) {$G \simeq G\times P_{1}$};

\begin{scope}[shift={(3.4,0)}]
  \def\dx{1.7}

  \begin{scope}[rotate=\rot]
    \coordinate (n2-1-1) at (\aA:\r);
    \coordinate (n2-2-1) at (\aB:\r);
    \coordinate (n2-3-1) at (\aC:\r);
  \end{scope}

  \begin{scope}[shift={(\dx,0)}, rotate=\rot]
    \coordinate (n2-1-2) at (\aA:\r);
    \coordinate (n2-2-2) at (\aB:\r);
    \coordinate (n2-3-2) at (\aC:\r);
  \end{scope}

  \begin{pgfonlayer}{bg}
    \draw[eG] (n2-1-1) -- (n2-2-1);
    \draw[eG] (n2-2-1) -- (n2-3-1);
    \draw[eG] (n2-1-2) -- (n2-2-2);
    \draw[eG] (n2-2-2) -- (n2-3-2);

    \draw[eP] (n2-1-1) -- (n2-1-2);
    \draw[eP] (n2-2-1) -- (n2-2-2);
    \draw[eP] (n2-3-1) -- (n2-3-2);
  \end{pgfonlayer}

  \begin{pgfonlayer}{fg}
    \foreach \p in {n2-1-1,n2-2-1,n2-3-1,n2-1-2,n2-2-2,n2-3-2}{
      \node[v] at (\p) {};
    }
  \end{pgfonlayer}
\end{scope}
\node[lab] at (3.4+0.85,-1.25) {$G\times P_{2}$};

\begin{scope}[shift={(7.5,0)}]
  \def\dx{1.55}

  \begin{scope}[rotate=\rot]
    \coordinate (n3-1-1) at (\aA:\r);
    \coordinate (n3-2-1) at (\aB:\r);
    \coordinate (n3-3-1) at (\aC:\r);
  \end{scope}

  \begin{scope}[shift={(\dx,0)}, rotate=\rot]
    \coordinate (n3-1-2) at (\aA:\r);
    \coordinate (n3-2-2) at (\aB:\r);
    \coordinate (n3-3-2) at (\aC:\r);
  \end{scope}

  \begin{scope}[shift={(2*\dx,0)}, rotate=\rot]
    \coordinate (n3-1-3) at (\aA:\r);
    \coordinate (n3-2-3) at (\aB:\r);
    \coordinate (n3-3-3) at (\aC:\r);
  \end{scope}

  \begin{pgfonlayer}{bg}
    \foreach \k in {1,2,3}{
      \draw[eG] (n3-1-\k) -- (n3-2-\k);
      \draw[eG] (n3-2-\k) -- (n3-3-\k);
    }

    \foreach \i in {1,2,3}{
      \draw[eP] (n3-\i-1) -- (n3-\i-2);
      \draw[eP] (n3-\i-2) -- (n3-\i-3);
    }
  \end{pgfonlayer}

  \begin{pgfonlayer}{fg}
    \foreach \p in {n3-1-1,n3-2-1,n3-3-1,n3-1-2,n3-2-2,n3-3-2,n3-1-3,n3-2-3,n3-3-3}{
      \node[v] at (\p) {};
    }
  \end{pgfonlayer}
\end{scope}
\node[lab] at (9.05,-1.25) {$G\times P_{3}$}; 

\end{tikzpicture}
\vspace{1mm}
\hrulefill
\vspace*{5mm}
\begin{tikzpicture}[
    v/.style={circle, fill=black, draw=black, line width=0.7pt, inner sep=1.6pt},
    eG/.style={draw=black, line width=0.8pt},
    eP/.style={draw=blue!90!black!50, line width=1.0pt, draw opacity=0.85},
    lab/.style={font=\small}
]
\pgfdeclarelayer{bg}
\pgfdeclarelayer{fg}
\pgfsetlayers{bg,main,fg}

\def\r{0.75}
\def\rot{15}

\def\aA{45}
\def\aB{135}
\def\aC{225}
\def\aD{315}

\def\sOne{0}
\def\sTwo{3.7}
\def\sThree{7.4}

\begin{scope}[shift={(\sOne,0)}]
  \begin{scope}[rotate=\rot]
    \coordinate (A1) at (\aA:\r);
    \coordinate (B1) at (\aB:\r);
    \coordinate (C1) at (\aC:\r);
    \coordinate (D1) at (\aD:\r);

    \begin{pgfonlayer}{bg}
      \draw[eG] (B1) -- (C1);
      \draw[eG] (C1) -- (D1);
      \draw[eG] (D1) -- (A1);
      \draw[eP] (A1) -- (B1);
    \end{pgfonlayer}

    \begin{pgfonlayer}{fg}
      \foreach \p in {A1,B1,C1,D1}{\node[v] at (\p) {};}
    \end{pgfonlayer}
  \end{scope}
\end{scope}
\node[lab] at (\sOne,-1.35) {$G\simeq S_0(G, e)$};

\begin{scope}[shift={(\sTwo,0)}]
  \begin{scope}[rotate=\rot]
    \coordinate (A2) at (\aA:\r);
    \coordinate (B2) at (\aB:\r);
    \coordinate (C2) at (\aC:\r);
    \coordinate (D2) at (\aD:\r);

    \coordinate (E2) at ($(A2)!0.5!(B2)$);

    \begin{pgfonlayer}{bg}
      \draw[eG] (B2) -- (C2);
      \draw[eG] (C2) -- (D2);
      \draw[eG] (D2) -- (A2);
      \draw[eP] (A2) -- (E2);
      \draw[eP] (E2) -- (B2);
    \end{pgfonlayer}

    \begin{pgfonlayer}{fg}
      \foreach \p in {A2,B2,C2,D2,E2}{\node[v] at (\p) {};}
    \end{pgfonlayer}
  \end{scope}
\end{scope}
\node[lab] at (\sTwo,-1.35) {$S_1(G, e)$};

\begin{scope}[shift={(\sThree,0)}]
  \begin{scope}[rotate=\rot]
    \coordinate (A3) at (\aA:\r);
    \coordinate (B3) at (\aB:\r);
    \coordinate (C3) at (\aC:\r);
    \coordinate (D3) at (\aD:\r);

    \coordinate (E3) at ($(A3)!0.333!(B3)$);
    \coordinate (F3) at ($(A3)!0.666!(B3)$);

    \begin{pgfonlayer}{bg}
      \draw[eG] (B3) -- (C3);
      \draw[eG] (C3) -- (D3);
      \draw[eG] (D3) -- (A3);
      \draw[eP] (A3) -- (E3);
      \draw[eP] (E3) -- (F3);
      \draw[eP] (F3) -- (B3);
    \end{pgfonlayer}

    \begin{pgfonlayer}{fg}
      \foreach \p in {A3,B3,C3,D3,E3,F3}{\node[v] at (\p) {};}
    \end{pgfonlayer}
  \end{scope}
\end{scope}
\node[lab] at (\sThree,-1.35) {$S_2(G, e)$};

\end{tikzpicture}
\caption{Examples of $G$-cylinders and subdivisions $S_n(G, e)$.}
\end{figure}
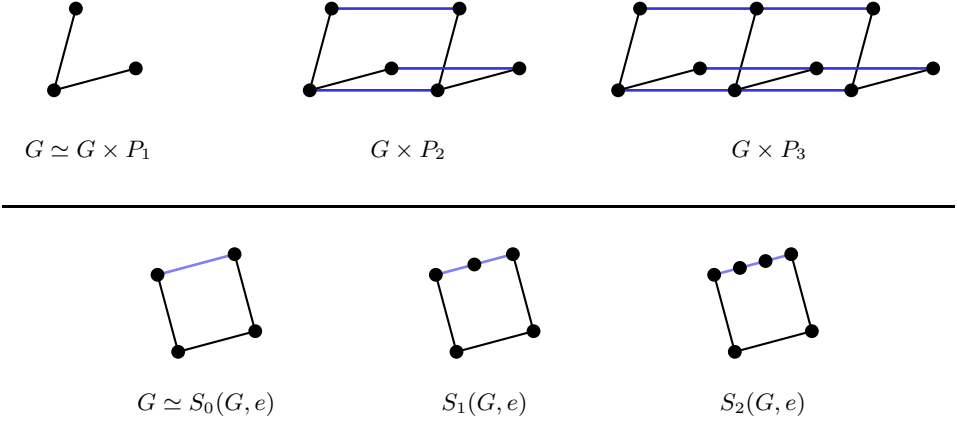

\begin{theorem}\label{thm:ratfamilies}
The graph families \ref{it:cylinder}--\ref{it:lastone} are rational.
\end{theorem}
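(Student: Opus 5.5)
The natural approach is a transfer-matrix argument. For each family, $W$ of the $n$th graph is a sum over colorings. That sum factors as a product of finitely many fixed matrices, with one matrix raised to a power depending linearly on $n$. Once this is set up, rationality is automatic: if $T$ is a square matrix with entries in $\Z[t]$ and $a,b$ are fixed vectors, then
\[
\sum_{n\ge 0} (a^{T} T^{n} b)\,x^{n} = a^{T}(I-xT)^{-1}b,
\]
which is rational in $x$ with denominator $\det(I-xT)$ by Cramer's rule. Finite sums and products of such series stay rational, and so does a shift by finitely many initial terms.

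\textbf{The cylinder and extrusion families.} Let $H$ be a graph with $k$ vertices.

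The states are pairs $(c_{i},c_{i+1})$ of colorings of two consecutive layers $H\times\{i\}$ and $H\times\{i+1\}$, so there are $4^{k}$ states. The pair is needed because admissibility of a vertex in layer $i$ depends on its colour, its neighbours in layer $i$, and its neighbours in layers $i\pm1$.

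The matrix $T$ sends the state $(c_{i-1},c_{i})$ to the state $(c_{i},c_{i+1})$ when the middle coordinates agree. Its weight is $t^{a}$, where $a$ is the number of admissible vertices of layer $i$. This count is determined by $(c_{i-1},c_i,c_{i+1})$.

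The boundary vectors handle the ends.
\begin{enumerate}
\item At the far end, the last layer has no next layer. It is treated as having an all-white next layer, since a white neighbour contributes nothing to the parity.
\item At the near end we use $G$ together with layer $2$. For each coloring of $G$ and of $H\times\{2\}$ we record $t$ raised to the number of admissible vertices of $G$; a vertex of $i_H(H)$ also sees its neighbour in layer $2$.
\end{enumerate}
With these choices, $W_{X_{n}(G,H)}=a^{T}T^{n-2}b$ for $n\ge 2$, and (GC) is the special case $G=H$.

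\textbf{The torus and earring families.} For (GT) the same $T$ works and we take a trace: $W_{G\times C_n}=\operatorname{tr}T^{n}$ for $n\ge 3$. Then
\[
\sum_{n}\operatorname{tr}(T^{n})x^{n}=-x\frac{d}{dx}\log\det(I-xT),
\]
which is rational.

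For (GE) we cut the cycle open at layer $1$. Write the sum as $\sum_{c_{1},c_{2},c_{n}}(\text{weight of }G\text{ given } c_2,c_n)\cdot(T^{n-2})_{(c_1,c_2),(c_{n-1},c_n)}$, with appropriate care at layer $1$. This is a finite sum of matrix entries of $T^{n-2}$ times polynomials in $t$, so it is rational.

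Small $n$ (where $C_n$ degenerates or the graph acquires parallel edges) changes only finitely many coefficients, which preserves rationality.

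\textbf{The subdivision families.} For (GS1), subdividing $e=\{u,v\}$ inserts a path. Its transfer states are pairs of colours of consecutive path vertices, giving a $4\times4$ matrix $T_{P}$ with boundary conditions given by the colorings of $G$ at $u$ and $v$. The contribution of $u$ depends on the colour of its new neighbour rather than on $c(v)$. So $W_{S_n(G,e)}$ is a finite sum over colorings of $G$ of $t^{(\cdots)}$ times entries of $T_{P}^{n-1}$ or similar, and is therefore rational.

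For (GSm), the count over $n_1+\dots+n_m=n$ is a product of generating functions. Writing the weight as a sum over colorings of $G$ near the endpoints of the $e_k$, each term factors as $\prod_k(\text{series for edge }k)$. Hence $\Xi$ is a finite sum of products of $m$ rational series. Here I read the union in \eqref{eq:subdividefamily} as indexing simultaneous subdivision of all $m$ edges, counted with multiplicity.

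\textbf{Main obstacle.} The delicate step is the bookkeeping: admissibility is not a local edge weight. It depends on the parity of a vertex's black neighbours, and that neighbourhood straddles several layers. The pair-of-layers state space resolves this, and the real care goes into writing the boundary vectors correctly for the glued graph $G$, especially when $H$-vertices of $G$ have additional neighbours in $G\setminus H$. The recurrence itself is routine.
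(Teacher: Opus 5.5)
Your argument is correct. It uses the same tool as the paper, the transfer-matrix method, but for several families you decompose the problem differently.

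\begin{itemize}
\item \textbf{Cylinder and extrusion.} You use the paper's state space (colourings of two consecutive layers). The difference is bookkeeping only. Your $T$ has polynomial entries recording the admissibility of the middle layer, plus a terminal vector for the last layer. The paper instead uses incremental Laurent weights $\mu_{\tilde S}/\mu_S$, so each entry of $A^{N}v_{0}$ is already a complete $W$-contribution and one just sums entries.
\item \textbf{(GT).} The paper enlarges the states to colourings of $G\times P_4$ and inserts each new layer between the middle two. You take $\operatorname{tr}T^{n}$ for the two-layer matrix and use $\sum_n\operatorname{tr}(T^n)x^n=-x\frac{d}{dx}\log\det(I-xT)$. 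This is more economical: a $4^{|G|}$- rather than $16^{|G|}$-dimensional matrix, hence a much smaller a priori denominator. It also meshes with Section~\ref{s:wright}. With full-$W$ weights and $G$ a point, $\operatorname{tr}T$ and $\operatorname{tr}T^2$ automatically give the loop and double-edge conventions for $C_1$ and $C_2$. The determinant $\det(I-xT)=1-(b+w_+)x+(w_+^2-w_-^2)bx^3$ is exactly the denominator in \eqref{eq:fullxi}. The convolution with $\frac12\log\frac{1}{1-x}$ becomes simply $-\frac12\log\det(I-xT)$.
\item \textbf{(GS1) and (GSm).} The paper uses a sixteen-state insert-in-the-middle scheme, and for several edges a leveled digraph with $16^m m$ vertices. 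You run an ordinary $4\times4$ transfer matrix along each inserted path and then use closure of rational functions under finite sums and products. This is simpler, and it makes counting with multiplicity transparent.
\end{itemize}
What the paper's route buys is uniformity. Every family becomes literally a series of the form \eqref{eq:gf2} for one digraph, with initial data an honest $W$-polynomial of a small member and no terminal vector.

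Two bookkeeping points need fixing.
\begin{itemize}
\item In (GE), the entry $(T^{n-2})_{(c_1,c_2),(c_{n-1},c_n)}$ only accounts for layers $2,\dots,n-1$. Admissibility of layer $n$ depends on $(c_{n-1},c_n,c_1)$, so use $(T^{n-1})_{(c_1,c_2),(c_n,c_1)}$ instead.
\item In (GSm), fixing only the colouring of the original vertices does not make the weight factor when two edges of $F$ share an endpoint. That endpoint's parity involves its new neighbours on both paths. You must also sum over the colours $\epsilon_k,\delta_k$ of the new vertices adjacent to $u_k,v_k$; there are finitely many choices. Each path then contributes the single entry $(T_P^{n_k})_{(c(u_k),\epsilon_k),(\delta_k,c(v_k))}$. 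With $T_P^{0}=I$ this handles $n_k=0$ and $n_k=1$ uniformly.
\end{itemize}
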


\subsection{} To prove Theorem \ref{thm:ratfamilies}, we use the
\emph{transfer matrix method}, which we now recall for the convenience
of the reader.  For details see \cite[\S4.7]{stanley}.

A digraph $D$ is the data of a triple $(V, E, \varphi)$ where $V$ is a
finite set of vertices, $E$ is a finite set of directed arcs, and
$\varphi$ is a function $\varphi \colon E \rightarrow V\times V$ that
takes an arc $e$ to the pair $(\init e, \final e)$ of its initial and
final vertices. A \emph{walk} on $D$ of length $N$ is a sequence
$\Gamma$ of edges
$e_{1},\dotsc ,e_{N}$ with $\final e_{i} = \init e_{i+1}$ for $i=1,
\dotsc , N-1$.  We assume that $D$ is equipped with a weight function
$\omega \colon E \rightarrow R$, where $R$ is a commutative ring.  The
\emph{weight of a walk} $\omega (\Gamma)$ is the product $\prod \omega (e)$ of the
weights of each edge in the walk.

Suppose there are $p$ vertices, and identify the set $V$ with the
finite set $\sets{p}$.  Given two vertices $i, j$, we define 
\[
A_{ij} (N) = \sum_{\Gamma} \omega (\Gamma),
\]
where the sum is taken over all walks $\Gamma$ of length $N$ beginning at
vertex $i$ and terminating at vertex $j$.  Since $D$ has finitely many
vertices and edges, this is a finite sum in $R$.  We let $A$ be the
$p\times p$ matrix with $(i,j)$th entry $A_{ij} (1)$.  The matrix $A$
is called the \emph{weighted adjacency matrix} of $D$ with respect to
the weight function $\omega$.   It is a basic result \cite[Theorem
4.7.1]{stanley} that 
\[
A_{ij} (N) = (A^{N})_{ij},
\]
where we take the convention that $A^{0} = I$ even if $A$ is not
invertible.

We collect the $A_{ij} (N)$ into generating functions
\begin{equation}\label{eq:gf1}
F_{ij} (D) (x) = \sum_{N \geq 0} A_{ij} (N)x^{N} = \sum_{N \geq 0}
(A^{N})_{ij} x^{N}, \quad \text{$1\leq i,j\leq p$}.
\end{equation}
We will also need a variation on this theme.  One can pick an initial vector
$v_{0}\in R^{p}$ and define the generating functions
\begin{equation}\label{eq:gf2}
M_{i} (D, v_{0}) (x) = \sum_{N \geq 0} (A^{N}v_0)_{i} x^{N}, \quad \text{$1\leq i\leq p$.}
\end{equation}
The main theorem about the generating functions \eqref{eq:gf1}--\eqref{eq:gf2} is the following:

\begin{theorem}\label{thm:tmm}
\leavevmode
\begin{enumerate}
\item The generating function $F_{ij} (D) (x)$ is rational and is given by
\[
F_{ij} (D) (x) = \frac{(-1)^{i+j} \det (I - xA \colon i,j)}{\det (I-xA)}.
\]
Here $I$ is the $p\times p$ identity matrix, and the notation $(B
\colon i,j)$ means the matrix obtained from $B$ by deleting the $j$th
row and the $i$th column. 
\item The generating function $M_{i} (D, v_{0})(x)$ is rational with
denominator given by $\det (I - xA)$. 
\end{enumerate}
\end{theorem}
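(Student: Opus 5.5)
The plan is to work in the ring of formal power series $R\fps{x}$ and package all the generating functions \eqref{eq:gf1} into a single matrix. Set
\[
F(x) = \sum_{N\geq 0} A^{N}x^{N} \in \mathrm{Mat}_{p}(R\fps{x}),
\]
so that $F_{ij}(D)(x)$ is the $(i,j)$ entry of $F(x)$, using $A_{ij}(N) = (A^{N})_{ij}$ from \cite[Theorem 4.7.1]{stanley}. The first step is the telescoping identity
\[
(I - xA)\,F(x) = \sum_{N\geq 0} A^{N}x^{N} - \sum_{N\geq 0} A^{N+1}x^{N+1} = I,
\]
together with the analogous identity $F(x)(I-xA) = I$. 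This is legitimate coefficientwise in $x$, since each coefficient involves only finitely many terms. Hence $F(x)$ is a two-sided inverse of $I - xA$ in $\mathrm{Mat}_{p}(R\fps{x})$.

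The second step is to identify this inverse via the adjugate. Over any commutative ring $S$ (here $S = R\fps{x}$) one has $B\,\mathrm{adj}(B) = \det(B) I$. Here $\mathrm{adj}(B)_{ij} = (-1)^{i+j}\det(B\colon i,j)$, where $(B\colon i,j)$ deletes the $j$th row and $i$th column, matching the convention in the statement. Now $\det(I - xA)$ is a polynomial in $x$ with constant term $1$, so it is a unit in $R\fps{x}$. Therefore
\[
(I-xA)^{-1} = \det(I-xA)^{-1}\,\mathrm{adj}(I-xA).
\]
By uniqueness of inverses, $F(x)$ equals this matrix. Reading off the $(i,j)$ entry gives the formula in part (1). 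Both numerator and denominator are polynomials in $x$ with coefficients in $R$, so $F_{ij}(D)$ is rational.

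For part (2), note that $M_i(D,v_0)(x)$ is the $i$th entry of $F(x)v_0$, namely $\sum_j F_{ij}(D)(x)\,(v_0)_j$. This is an $R$-linear combination of the rational functions from part (1), all sharing the denominator $\det(I-xA)$. So $M_i$ is rational with numerator $\sum_j (-1)^{i+j}\det(I-xA\colon i,j)(v_0)_j$ and denominator $\det(I-xA)$.

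The only point requiring care is that $R$ is an arbitrary commutative ring rather than a field. One cannot invert $I-xA$ by linear algebra over a field. Instead one uses the adjugate identity, valid over any commutative ring, together with the invertibility of $\det(I-xA)$ in $R\fps{x}$ because its constant term is $1$. Everything else is bookkeeping.
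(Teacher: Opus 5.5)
Your proof is correct and is essentially the paper's. Part (2) is exactly the paper's argument: you write $M_i(D,v_0)$ as the $R$-linear combination $\sum_j (v_0)_j F_{ij}(D)$. For part (1) the paper only cites \cite[Theorem 4.7.2]{stanley}, whose proof is the same argument you give: the telescoping identity $(I-xA)\sum_{N} A^N x^N = I$ combined with the adjugate formula. Your check that this works over an arbitrary commutative ring, because $\det(I-xA)$ has constant term $1$ and is therefore a unit in $R\fps{x}$, is the right justification for the paper's setting $R=\Z[t,t^{-1}]$.
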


\begin{proof}
For a proof of the first statement, see \cite[Theorem 4.7.2]{stanley}.
The second statement is an easy consequence of the first.  Indeed if
we take $v_{0}$ to be the $j$th standard basis vector, then $Av_{0}$
is the $j$th column of $A$ and so $M_{i}=F_{ij}$.  Thus for general
$v_{0}$ the generating function is an $R$-linear combination of the
$F_{ij}$, which implies the result.  
%
\end{proof}
 
\subsection{} Our strategy to prove Theorem \ref{thm:ratfamilies} will be to find for each family
$\sG$ a weighted digraph $D = D (\sG)$.  The weights of the edges $E$
will take values in the ring $R =
\Z [t, t^{-1}]$, the Laurent polynomials in $t$.   We will also
construct a vector $v_{0}\in \Z [t]^{|V|}$, where $V$ is the
vertex set of $D$, such that $\Xi_{\sG} (x)$ is, up to finitely many
terms, a linear combination over $\Z [x]$ of series of the form
\eqref{eq:gf2}.  By Theorem \ref{thm:tmm}, this shows that $\Xi_{\sG} (x)$
is a rational function in $x$ with coefficients in $R$ with denominator $\det (I - xA)$, where
$A$ is the weighted adjacency matrix of $D$.  After multiplying top and bottom by a suitable power of $t$, we see that $\Xi_{\sG} (x)$ is a rational function in $x$ with coefficients in $\Z[t]$.

\begin{proof}
We consider each family in turn, beginning with the $G$-cylinders \ref{it:cylinder}.
Thus each set $\sG_{n}$ consists of the single graph $G\times
P_{n}$, $n\geq 1$.  

Let $\sS $ be the set of all $G$-cylinders of length $2$ equipped with
all possible colorings, and similarly let $\tilde{\sS}$ be the set of
$G$-cylinders of length $3$ equipped with all possible colorings.  Thus
$|\sS | = 4^{|G|}$ and $|\tilde{\sS}| = 8^{|G|}$.  Each $\tilde{S}\in
\tilde{\sS}$ determines two colored cylinders $S, S'\in \sS$ by taking
the subgraphs $G \times \{1,2 \}$ (respectively $G\times \{2,3 \}$) of
$\tilde{S}$ (see Figure \ref{fig:backfront}).  We write $S =
\tilde{S}_{b}$ and $S' = \tilde{S}_{f}$ ($b$ and $f$ stand
respectively for \emph{back} and \emph{front}).  Any $S \in \sS$ or
$\tilde{\sS} $ determines a monomial $\mu_{S} (t) \in \Z [t]$ by
\begin{equation}\label{eq:monomials}
\mu_{S} (t) = \prod_{{v \in \alpha (c)}} t,
\end{equation}
where $c$ is the coloring of $S$.

\begin{figure}
\centering
\resizebox{0.5\textwidth}{!}{
\begin{tikzpicture}

\def\a{1.10}   
\def\b{2.20}   
\def\dx{3.40}  

\newcommand{\cylinder}[3]{%
  \begin{scope}[shift={(#1,#2)}]
    \pgfmathtruncatemacro{\kmone}{#3-1}
    \pgfmathsetmacro{\L}{\dx*\kmone}

    \draw[line width=1.1pt] (0,\b) -- (\L,\b);
    \draw[line width=1.1pt] (0,-\b) -- (\L,-\b);

    \foreach \i in {0,...,\kmone} {
      \pgfmathsetmacro{\x}{\i*\dx}
      \draw[line width=1.2pt] (\x,0) ellipse [x radius=\a, y radius=\b];
      \node[scale=2.1] at (\x,0) {$G$};
    }
  \end{scope}%
}

\cylinder{-1.4}{0}{3}
\node[scale=2.3] at ({1.96}, {\b+0.7}) {$\tilde{S}$};

\node[scale=1.9] at (-1.4,   {-\b-0.5}) {$1$};
\node[scale=1.9] at (-1.4+\dx, {-\b-0.5}) {$2$};
\node[scale=1.9] at ({-1.4+2*\dx}, {-\b-0.5}) {$3$};

\draw[line width=1.0pt,  -{Latex[length=3mm]}] ({-1.4},{-3.2}) -- (-3.0,-5);
\draw[line width=1.0pt,  -{Latex[length=3mm]}] ({5.4},{-3.2}) -- ( 6.8,-5);

\cylinder{-4.6}{-7.6}{2}
\node[scale=2.3] at (-3.0, {-7.6-\b-1.2}) {$\tilde{S}_b$};
\node[scale=1.9] at (-4.6,      {-7.6-\b-0.6}) {$1$};
\node[scale=1.9] at (-4.6+\dx,  {-7.6-\b-0.6}) {$2$};

\cylinder{5.3}{-7.6}{2}
\node[scale=2.3] at (7.0, {-7.6-\b-1.2}) {$\tilde{S}_f$};
\node[scale=1.9] at (5.3,      {-7.6-\b-0.6}) {$2$};
\node[scale=1.9] at (5.3+\dx,  {-7.6-\b-0.6}) {$3$};

\end{tikzpicture}
}
\caption{\label{fig:backfront}  The back and front of a state.}
\end{figure}

Now we define the weighted digraph $D (\sG)$ as follows:
\begin{enumerate}
\item The vertices of $D$ are in bijection with $\sS$.  We write
$v_{S}$ for the vertex corresponding to $S\in \sS$.
\item If $S, S'\in \sS$, then there is an edge from $v_{S}$ to
$v_{S'}$ if and only if there is an $\tilde{S}\in \tilde{\sS}$ such
that $S = \tilde{S}_{b}$ and $S' = \tilde{S}_{f}$.  
\item If there is an edge $e$ from $v_{S}$ to $v_{S'}$, then $\omega
(e) = \mu_{\tilde{S}} (t)/\mu_{S}(t)$.
\end{enumerate}
We let $A = A (\sG)$ be the weighted adjacency matrix of $D$; it is
$p\times p$ where $p=4^{|G|}$.  We define the initial vector $v_{0}$
to be the $p$-vector whose entry indexed by $S\in \sS$ is the monomial
$\mu_{S} (t)$.

Given all this data, we claim 
\begin{equation}\label{eq:xicylin}
\Xi_{\sG} (x) = 1 + W_{G} (t)x + x^{2} \sum_{i=1}^{p}
M_{i} (D, v_{0}).
\end{equation}
This implies that $G$-cylinders form a rational family.

To prove \eqref{eq:xicylin}, first we observe that the coefficient of
$x^{2}$ is by construction the polynomial $W_{G \times P_{2}} (t)$, since the
elements in $\sS$ range over all colorings of $G \times P_{2}$ and the entries
of $v_{0}$ are just the different monomial contributions of these
colorings.  

Now suppose the result is true for the coefficient of $x^{k}$, namely
that
\[
\sum_{i=1}^{p} (A^{k-2}v_{0})_{i} = W_{G \times P_{k}} (t).
\]
We also suppose that for $S\in \sS$, the entry of $A^{k-2}v_{0}$
indexed by $S$ gives the total contribution to $W_{G \times P_{k}} (t)$ of the
colorings of $G \times P_{k}$ such that the subgraph $G\times \{k-1,k\}$ has
the same coloring as $S$; this is true in the base case $k=2$.

Now consider passing to $A^{k-1}v_{0}$.  By construction the entry of
$A$ indexed by $S$ and some other $S'\in \sS$ describes the change in
this total contribution as $S$ is extended to a colored $3$-cylinder
$\tilde{S}\in \tilde{\sS} $ such that $S=\tilde{S}_{b}$ and
$S'=\tilde{S}_{f}$.  Multiplying by $A$ means that we sum over all
these contributions.  The resulting vector $A^{k-1}v_{0}$ now has
entry $S$ giving the total contribution to $W_{G\times P_{k+1}} (t)$ of all
the colorings of $G \times P_{k+1}$ such that the subgraph $G\times \{k,k+1\}$
is colored the same as $S$.  From this it follows that
\[
\sum_{i=1}^{p} (A^{k-1}v_{0})_{i} = W_{G \times P_{k+1}} (t),
\]
which completes the proof of \ref{it:cylinder}.

We remark that the proof shows why we must take $S,
S'$ to have underlying graphs $G\times P_{2}$:  attaching a copy
of $G$ to pass from $G \times P_{k}$ to $G\times P_{k+1}$ potentially affects the
contributions of the vertices in $G\times \{k \}$ and $G\times \{k-1
\}$, but cannot affect the contributions of
the vertices in the subgraphs $G\times \{q \}$ for $q < k-1$.

Next we consider the extrusion of $G$ along $H$; the proof of
\ref{it:extrusion} is very similar to that of the $G$-cylinder.  The
set $\sS$ (respectively, $\tilde{\sS}$) is the set of colorings of
$H\times P_{2}$ (resp., $H\times P_{3}$), and the weighted digraph $D$
and matrix $A$
are defined in the same way.  The initial vector $v_{0}$ has $S$ entry
equal to the total contribution of all colorings $c$ of $X_{2} (G,H)$
such that the restriction of $c$ to $H\times P_{2}$ agrees with $S$.
The definition \eqref{eq:xicylin} is then the same.   This proves that the extrusion of $G$ along $H$ is rational.

Now we consider the $G$-torus \ref{it:torus}.  Each set $\sG_{n}$
consists of the single graph $G\times C_{n}$.  We label the
cross-sections of the $G$-torus by elements of $\sets{n}$; this
amounts to fixing an initial cross-section and a direction on the
underlying cycle $C_{n}$.  The construction of $D$ is very similar to
the $G$-cylinder, but we must use larger sets $\sS$, $\tilde{\sS}$.
We take $\sS$ (respectively, $\tilde{\sS}$) to be the set of all
$G$-cylinders of length $4$ (resp., $5$) equipped with all possible
colorings.  Thus $|\sS | = 16^{|G|}$ and $\tilde{\sS} = 32^{|G|}$.  If
we write elements $S\in \tilde{\sS}$ as being supported on the graph
$G \times \sets{5}$, then the back and front operators correspond to
selecting certain subgraphs, along with their induced colorings:
\begin{equation}\label{eq:bf}
\tilde{S}_{b} = G \times \{1,2,4,5 \}, \quad \tilde{S}_{f} = G \times \{2,3,4,5 \}.
\end{equation}
In other words, we pass from a $D$-vertex $S$ to a new one $S'$ through
$\tilde{S}$ by (i) inserting a new vertex between the two middle
vertices of $S$ to make $\tilde{S}$, and (ii) taking the last four
vertices of $\tilde{S}$ to make $S'$.

The digraph $D$ and matrix $A$ are constructed the same way as the cylinder case.
The initial vector $v_{0}$ has entries indexed by $\sS$; the entry
labeled $S$ is the total contribution to $G\times C_{4}$ of the
coloring corresponding to $S$.  We then claim
\begin{equation}\label{eq:gf3}
\Xi_{\sG} (x) = W_{G\times C_{3}} (t)x^{3} + x^{4}
\sum_{i=1}^{p}M_{i} (D, v_{0}).
\end{equation}
Note that the generating function \eqref{eq:gf3} starts at $G\times
C_{3}$ since we do not allow loops or multiple edges.  The
verification that \eqref{eq:gf3} is indeed $\Xi_{\sG} (x)$ is very
similar to that for the $G$-cylinder, so we omit the details.
Similarly the proof for the length $n$ earring \ref{it:earring} 
 is very similar to that of the extrusion of $G$ along $H$.

Finally we must prove that the subdivision graphs \ref{it:oneedge},
\ref{it:lastone} form rational families.  Let $e$ be a fixed edge of
$G$, and recall that $S_{n} (G,e)$ is the graph obtained by subdividing
$e$ by adding $n$ new vertices.  To lighten notation we write $S_{n}$
for $S_{n} (G,e)$.  Then the desired family is $\sG_{n} = \{S_{n} \}$.
Instead of the series $\Xi_{\sG} (x)$ we consider 
\[
\Xi '_{\sG } (x) = \sum_{n\geq 4} W_{S_{n}} (t)x^{n}.  
\]

Clearly to prove that $\Xi_{\sG}$ is rational it suffices to prove
that $\Xi '_{\sG}$ is rational.  We do this by using a variation on
the $G$-torus setup.  We take the set $\sS$ (respectively,
$\tilde{\sS} $) to be the path graph $P_{4}$ (resp.,~path graph $P_{5}$)
equipped with all possible colorings, and put $p = |\sS | =
16$. As with the $G$-torus \ref{it:torus}, we identify the vertices of any
$\tilde{S}\in \tilde{\sS}$ with $\sets{5}$, and use front and back
operators defined by
\[
\tilde{S}_{b} = \{1,2,4,5 \}, \quad \tilde{S}_{f} = \{2,3,4,5 \}.
\]
The digraph $D$ and the matrix $A$ are then constructed exactly the
same as in the $G$-torus. 

The vector $v_{0}$ has entries indexed by $\sS$ and computed as
follows.  Let $H \simeq P_{4}$ be the subgraph of $S_{4}$
corresponding to the $4$ new vertices and $3$ new edges added when
passing from $G$ to $S_{4}$.  Then
the entry of $v_{0}$ indexed by $S\in \sS$ is the restricted
$W$-polynomial
\[
W_{S_{4}, H, c (H)} (t),
\]
where $c (H)$ is the fixed coloring of $H$ corresponding to $S$.  We
then have 
\[
\Xi '_{\sG} (x) = x^{4}\sum_{i=1}^{p} M_{i} (D, v_{0}),
\]
which completes the proof of \ref{it:oneedge}. 

Finally we prove \ref{it:lastone}, which corresponds to subdividing
an arbitrary fixed subset $F = \{e_{1},\dotsc ,e_{m} \}$ of the edges
of $G$.  Let us call $m = |F|$ the \emph{rank} of the family
$\sG$. Instead of \eqref{eq:subdividefamily} we consider the family
$\sG ' = \{\sG '_{n} \}$ where 
\begin{equation}\label{eq:restsubdivide}
\sG'_{n} = \bigcup_{\substack{{n_{1},\dotsc ,n_{m}}\\ n_{i}\geq 4}} \bigcup_{k=1}^{m} S_{n_{k}}( G,e_{k}),
\end{equation}
where the outer union is taken over all $n_{1}+\dotsb +n_{m} = n$. It
suffices to prove that $\sG '$ is a rational family, since the general
case, up to finitely many graphs, is a union of lower rank families of
type $\sG '$, and \ref{it:oneedge} establishes the result for families
of rank $1$. 

We now describe the digraph $D$ for the rank $m$ family \eqref{eq:restsubdivide}.
Let $\sS$ and $\tilde{\sS} $ be as in the single-edge case
\ref{it:oneedge}, and let $D_{1}$ be the corresponding digraph.  Thus $|\sS | = 16$.  Then the vertices for the
digraph $D$
correspond to element of the set 
\[
\sS^{m} \times \sets{m} = \{(S_{1},\dotsc ,S_{m}, i) \mid S_{k}\in \sS
, 1\leq i \leq m \}.
\]
Thus $D$ has $p$ vertices where $p = 16^{m}\cdot m$.  We say that the
vertices of the form $(*, \dotsc , *, i)$ are at \emph{level $i$}.

The edges in $D$ come in two types:
\begin{enumerate}
\item [(i)] We have an edge from the level $i$ vertex 
\[
(S_{1}, \dotsc , S_{i}, \dotsc , S_{m}, i)
\]
to another vertex of level $i$ 
\[
(S_{1}, \dotsc , S_{i}', \dotsc , S_{m}, i)
\]
if there is an edge in $D_{1}$ from $S_{i}$ to
$S_{i}'$.  The weight of the edge in $D$ is the same as that of the
edge in $D_{1}$.
\item [(ii)] We have an edge from the level $i$ vertex 
\[
(S_{1}, \dotsc , S_{m}, i)
\]
to the level $i+1$ vertex 
\[
(S_{1}, \dotsc , S_{m}, i+1)
\]
for any $i=1,\dotsc ,m-1$.
The weight of this edge equals $1$.
\end{enumerate}
The edges of type (i) correspond to subdividing the edge $e_{i}$ by
adding a new vertex, whereas the edges of type (ii) correspond to being
finished subdividing $e_{i}$ and moving to a state in preparation for
subdividing $e_{i+1}$.

Now we describe the vector $v_{0}$.  It has entries indexed by 
\begin{equation}\label{eq:st1}
(S_{1}, \dotsc , S_{m}, i).
\end{equation}
We put all entries of $v_{0}$ to be $0$ except for those with $i=1$ in
\eqref{eq:st1}.  In this case, note that each $S_{k}$ in the tuple
corresponds to a coloring $c_{k}$ of the initial $4$ points added to
the edge $e_{k}\in F$.  We set the entry 
\[
(S_{1}, \dotsc , S_{m}, 1) 
\]
in $v_{0}$ to be the restricted $W$-polynomial
\[
W_{S_{4} (F), H, c} (t),
\]
where
\begin{itemize}
\item $S_{4} (F)$ is the graph resulting from adding $4$ new vertices
to each edge in $F$;
\item $H$ is the subgraph of $S_{4} (F)$ that corresponds to the sets
of $4$ inner vertices and $3$ inner edges of the subdivided edges in
$F$; in particular
\begin{equation}\label{eq:st}
H \simeq \coprod_{k=1}^{m} P_{4};
\end{equation}
and 
\item $c$ is the coloring of $H$ that colors the $k$th factor in
\eqref{eq:st} according to $S_{k}$ in \eqref{eq:st1}.
\end{itemize}
This completes the description of $D$ and $v_{0}$.

Finally we must write the generating function $\Xi'$ for the family
\eqref{eq:restsubdivide}.  We claim 
\begin{equation}\label{eq:finalsum}
\Xi_{\sG}' (x) = \sum_{i=p (m-1)/m + 1}^{p} \sum_{N\geq m-1}
(A^{N}v_{0})_{i} x^{N+3m+1}.
\end{equation}
Note that the sum over $i$ in \eqref{eq:finalsum} goes over the
entries of $A^{N} v_0$ corresponding to the vertices labeled
$(S_{1},\dotsc ,S_{k}, m)$ of $D$, i.e.~the vertices at level $m$.
The sum over $N$ begins at $m-1$, since this is the minimal number of
times we must apply $A$ to move from the states at level $1$ to the
states at level $m$.  The power of $x$ is chosen so that $x^{n}$ means
$n$ new vertices have been added.  Up to simple modifications the
expression \eqref{eq:finalsum} is of the same type as those in Theorem
\ref{thm:tmm}, so is a rational function in $x$.  This completes the
proof of the theorem.
\end{proof}

\begin{example}\label{ex:pathgraph}
We illustrate the theorem by giving the matrix $A$ and the vector
$v_{0}$ for the path graph family $G_{n} = P_{n}$.  This is the case of
the $G$-cylinder \ref{it:cylinder} with $G$ an isolated vertex.  The
set $\sS$ consists of the four different colorings of $P_{2}$, namely
$bb$, $bw$, $wb$, and $ww$.  These index the columns and the rows of
the matrix $A$ (see Figure \ref{fig:pathgraph}).  The set
$\tilde{\sS}$ consists of the eight different colorings of $P_{3}$.

Consider for example the first column of $A$, which is indexed by
$bb$.  The entry $1$ in row $bw$ indicates that there is a coloring of
$P_{3}$, namely the first two vertices black and the third white, such
that the corresponding $G$-cylinder $\tilde{\sS}$ has (i)
$\tilde{\sS}_{b}$ colored $bb$, (ii) $\tilde{\sS}_{b}$ colored $bw$,
and (iii) when passing from a coloring of $P_{n}$ ending in $bb$ to a
coloring of $P_{n+1}$ ending in $bw$, the contribution to the
$W$-polynomial does not change.  There are two nonzero entries in this
column because there are two ways to extend the $bb$ coloring of
$P_{2}$ to a coloring of $P_{3}$ with the first two cross-sections
colored $bb$.  

By contrast, the $t$ in column $bw$ and row $wb$ indicates that when
passing from a coloring of $P_{n}$ ending in $bw$ to a coloring of
$P_{n+1}$ ending in $wb$, the contribution to the $W$-polynomial does
change, since an even white vertex has been created.  Again there are
two nonzero entries in this column, corresponding to the two
extensions of $bw$ to $bwb$ and $bww$.  In both cases an even white
vertex is created, although these vertices are not the same in the two
cases.

Note that the sum of the entries of $v_{0}$ is $t^{2}+3$, which is the
$W$-polynomial of the graph $P_{2}$.  The vector $Av_{0}$ is $(2, 2,
2t, t+t^{2})$; its entries sum to the $W$-polynomial of $P_{3}$.

\begin{figure}
\centering \resizebox{0.7\textwidth}{!}{ \input{pathgraphtikz} }
\caption{\label{fig:pathgraph} The matrix $A$ and the vector $v_0$ for
the path graph family.}
\end{figure}

\end{example}

\begin{example}\label{ex:grids}
If for the $G$-cylinder we take $G$ to be a path graph $P_{m}$, then
the family $P_{m}\times P_{n}$, $n\geq 1$ gives the collection of
\emph{grid graphs} $\{G_{m,n} \}$ of fixed depth $m$.  Grid graphs
appear naturally in the context of quantum information. In particular,
the associated grid graph quantum states arise naturally as resources
for measurument-based quantum computation, see
e.g.~\cite{jozsa2006introduction, briegel2009measurement}.  By Theorem
\ref{thm:ratfamilies} this is a rational family.  For example, the
family $\{G_{2,n} \}$ of grid graphs of depth $2$ has the
$W$-polynomials shown in Table \ref{tab:g2}.  This family is rational
and its generating function is
\begin{equation}\label{eq:g2rf}
\frac{
1 - 4(t-1)^2x^2 - 3(t^2-1)^2x^3 + 4(t-1)^4(t+1)^2x^5}{1 - (t^2+3)x
+ 2(t^{2}-1)^2x^3 + (t^{2}-1)^2(t^{2}+3)x^4 - (t^{2}-1)^4x^6}.
\end{equation}

We remark that in the construction \ref{it:cylinder}, the digraph $D$ has order $16$, so $A$ is $16\times 16$ and the
denominator of \eqref{eq:g2rf} could have been degree $16$ in $x$.
Experimentally one finds that the denominator is considerably smaller: for the grid family of depth $m$, the denominator from the transfer matrix has degree $3^m$ in $x$, and the numerator has degree $3^{m-1}$. However this is still not the ``true'' denominator, since there is considerable cancellation.  Thus the denominator of
\eqref{eq:g2rf} corresponds to a different recursive computation
of $W_{G_{2,n}} (t)$ than that of Theorem \ref{thm:ratfamilies}.  We
have not checked how to describe this recursion combinatorially; nor
do we know the degree of the actual denominator of the rational
functions in Theorem \ref{thm:ratfamilies}.  

One can ask if the $W$-polynomials of the full collection of grid
graphs $\{G_{m,n} \mid m,n\geq 1\}$ could be grouped into a bivariate
generating function that is rational.  We strongly doubt that this is
possible, since although each family for fixed $m$ is rational, the
degrees of the denominators increase rapidly with $m$.

\begin{table}[htb]
\begin{center}
\begin{tabular}{|c|l|}
\hline
$n$& $W_{G_{2,n}} (t)$\\
\hline
0 &   $1$\\
1 &   $t^2 + 3$\\
2 &   $t^4 + 2t^2 + 8t + 5$\\
3 &   $t^6 + t^4 + 9t^3 + 20t^2 + 23t + 10$\\
4 &   $t^8 + t^6 + 12t^5 + 23t^4 + 48t^3 + 83t^2 + 68t + 20$\\
5 &   $t^{10} + t^8 + 15t^7 + 27t^6 + 67t^5 + 160t^4 + 253t^3 + 278t^2 + 177t + 45$\\
\hline
\end{tabular}
\end{center}
\caption{\label{tab:g2}}
\end{table}
\end{example}

\begin{example}\label{ex:subdivide}
We give an example of the subdivision family \ref{it:lastone}.  Let
$G$ be the path graph $P_{3}$ and let $F = E (G) = \{e_{1}, e_{2} \}$.
We consider the family $\sG '$ where each edge is subdivided a minimum
of four times.  Note that the first set of graphs in the family is
$\sG_{4} = \{P_{11} \}$, since after subdividing both edges four times
we obtain $P_{11}$. Similarly $\sG '_{5}$ is two copies of $P_{12}$,
since we have two ways to subdivide one edge in $F$ five times and the
other four times.  In general, one sees that for $n\geq 4$ the set $\sG
'_{n}$ consists of $n-3$ copies of $P_{n+7}$.

Now we consider the digraph $D$.  It has $p = 16^{2}\cdot 2 = 512$
vertices, the adjacency matrix $A$ is $p\times p$ and the vector
$v_{0}$ is $p$-dimensional.  Only the first $p/2 = 256$ entries of
$v_{0}$ are nonzero.  For any $p$-dimensional vector $v$ let $\rho (v)
= \sum_{i=p/2+1}^{p} v_{i}$ (this is the sum of the last $p/2$ of the
entries of $v$).

\begin{itemize}
\item Consider the sum $\rho (A v_{0})$.  We claim this is the
polynomial $W_{P_{11}} (t)$.  Indeed applying $A$ once corresponds to
taking one step in the digraph $D$.  To get a contribution to $\rho
(Av_{0})$ one must step from the vertices at level $1$ to those at
level $2$.  This corresponds to adding no new vertices to $P_{11}$,
and thus the result is $W_{P_{11}} (t)$.
\item We claim $\rho (A^{2}v_{0})$ is $2W_{12} (t)$.  Indeed
contributing to this
corresponds to taking two steps in the digraph $D$ and ending at a
level $2$ vertex.  One can step first within level $1$ (subdivide
$e_{1}$) and then down to level $2$ (don't subdivide $e_{2}$) or can step
down to level $2$ (don't subdivide $e_{1}$) and then step within level
$2$ (subdivide $e_{2}$).  After this one has subdivided either $e_{1}$
or $e_{2}$ to $5$ vertices, so the result is $2$ copies of $P_{12}$.

\item One can similarly check that the polynomial $\rho (A^{n}v_{0})$
is $n W_{P_{11+n-1}} (t)$. 
\end{itemize}
The final series is 
\begin{multline*}
\Xi_{\sG}' (x) = \\
(t^{11} + 2t^9 + 13t^8 + 24t^7 + 58t^6 + 146t^5 \\
+ 308t^4 + 519t^3 + 566t^2 + 332t + 79)y^8 \\
+ (2t^{12}
+ 4t^{10} + 28t^9 + 52t^8 + 128t^7 + 336t^6 \\
+ 728t^5 + 1426t^4 + 2144t^3 + 2044t^2 + 1068t + 232)y^9 + \dotsb \\
\end{multline*}
and the rational function has numerator
\begin{multline*}
(t^{13} + 11t^{10} + 17t^9 + 25t^8 + 72t^7 + 78t^6 - 77t^5 -
238t^4 - 136t^3 + 87t^2 + 123t + 37)y^{12} \\
+ (2t^{12} + 20t^9 +
30t^8 + 48t^7 + 96t^6 - 8t^5 - 242t^4 - 208t^3 + 64t^2 + 148t
+ 50)y^{11} \\
+ (-2t^{12} + t^{11} - 2t^{10} - 24t^9 - 31t^8 - 66t^7 -
192t^6 - 372t^5 - 404t^4 + 9t^3 + 506t^2 + 452t + 125)y^{10} 
\\
+ (-2t^{11} - 2t^9 - 22t^8 - 36t^7 - 72t^6 - 180t^5 - 228t^4 -
26t^3 + 248t^2 + 246t + 74)y^9 \\
+ (t^{11} + 2t^9 + 13t^8 + 24t^7 +
58t^6 + 146t^5 + 308t^4 + 519t^3 + 566t^2 + 332t +
79)y^8
\end{multline*}
and denominator
\[
(t^4 - 2t^2 + 1)y^6 + (-2t^3 - 2t^2 + 2t + 2)y^4 +
(2t^2 - 2)y^3 + (t^2 + 2t + 1)y^2 + (-2t - 2)y + 1.
\]
\end{example}

\section{The black-white polynomials of all graphs}\label{s:feynman}

\subsection{} In this section we describe a method to compute the
exponential generating function of the $W$-polynomials over \emph{all}
connected graphs.  Here we allow graphs to have loops and parallel
edge sets.  The extension of the definition of $W (t)$ to such graphs
is done in the obvious way.  First of all, loops do not change the
parity of a vertex, so do not affect the definition of admissibility
of a vertex.  Otherwise, suppose the color of a vertex $v$ is 0 and
that $w_{1},\dotsc ,w_{k}$ is the full set of vertices joined to $v$
with $c (w_{i})=1$, $i=1,\dotsc ,k$.  Suppose there are $m_{i}$
parallel edges going between $v$ and $w_{i}$, $i=1,\dotsc ,k$.  Then
$v$ is admissible if either $c (v) = 1$, or if $c (v) = 0$ and
\[
\sum_{i=1}^{k}m_{i} = 0 \bmod 2.
\]
With this modification, the definition of $W$ proceeds exactly as in
Definition \ref{def:defofW}.

\subsection{}
To define our generating function, we need more notation.  Let $u,
\xi_{1}, \xi_{2}, \dotsc$ be indeterminates.  Our generating function
will ultimately live in the ring 
\[
\Q [t, \xi _{1}, \xi_{2}, \dotsc  ]\fps{u},
\]
i.e.~it will be a formal power series in $u$ with coefficients
rational polynomials in $t$ and the $\xi_{i}$ (by definition, for any
element in the infinite polynomial ring only finitely many of the
$\xi_{i}$ can appear).

Let $\n = (n_{1},n_{2},\dotsc)$ be a vector of nonnegative integers,
with $n_{k}$ nonzero only for finitely many $k$.  Let $|\n | = \sum k
n_{k}$. We say a graph $G$ has \emph{profile $\n$} if it has $n_{k}$
vertices of degree $k$.  Let $G (\n)$ be the set of all graphs of
profile $\n$, up to isomorphism, and let $G_{c} (\n)$ be the connected
graphs.  We note that $G (\n)$ is a
\emph{finite} set.  For any $G \in G (\n)$, let $\Aut G$ be its
automorphism group.  This includes all the automorphisms induced from
permuting vertices, but it also includes automorphisms that permute
sets of parallel edges and flip the half-edges of loops; such
automorphisms act trivially on the vertices. 

\begin{definition}\label{def:bigseries}
The \emph{generating function for the $W$-polynomials of all graphs}
is
\begin{equation}\label{eq:Wgraphseries}
\sB (u) = \sum_{\n } u^{|\n |} \sum_{G \in G_{c} (\n
)}\frac{W_{G}(t)\prod_{k} \xi_{k}^{n_{k}}}{|\Aut G|}.
\end{equation}
\end{definition}

\subsection{}
The main result of this section is Theorem \ref{thm:genfunallW}, which gives an expression for $\sB(u)$ using the technique of Feynman
diagrams.  This technique allows one to build graph generating functions with
various restrictions on the types of graphs, like valences of vertices
taken from a fixed set of positive integers, bipartite graphs, and so
on.  For more details about the technique we refer to Etingof
\cite{etingof}.  As a warm-up, in section \ref{ss:allgraphs} we first describe how to build the
series $\sB (u)$ where all the $W$-polynomials are set to $1$,
i.e.~the generating function of all graphs weighted by their
automorphisms (Theorem~\ref{thm:genfunallG}).  The key point is the combinatorial interpretation of
the Gaussian moments.

\subsection{}\label{ss:allgraphs}
Let $x$ be a real variable.  Suppose we normalize the measure $dx$ on
$\RR$ such that $\int_{-\infty}^{\infty} e^{-x^{2}/2} \, dx =
1$. Consider the moment
\begin{equation}\label{eq:gaussianmoment}
\ip{x^{k}} :=
\int_{-\infty}^{\infty} x^{k} e^{-x^{2}/2}\, dx.
\end{equation}
If $k$ is odd, we have $\ip{x^{k}}=0$.  If $k$ is even, then it is
well known that $\ip{x^{k}}$ is the \emph{Wick number}
\[
W_{2} (k) = \frac{k!}{2^{k/2} (k/2)!},
\]
which counts the pairings of the finite set $\sets{k} := \{1,\dotsc ,k
\}$.  Let $S (x)$ be the formal power series $\sum_{k\geq 1}
\xi_{k}x^{k}/k!$.  Using \eqref{eq:gaussianmoment}, we can evaluate the
integral
$\ip{\exp (S (ux))}$ as a formal
power series in $u$ with coefficients in the polynomial ring $\QQ
[\xi_{1}, \xi_{2}, \dotsc ]$: we simply replace each power of $x$ with
the relevant Wick number.  After doing this we find 
\begin{equation}\label{eq:ipSg}
\ip{\exp S (ux)} = 1 +A_{2}u^{2}/2 + A_{4}u^{4}/8
+A_{6}u^{6}/48 + \dotsb 
\end{equation}
where the first few coefficients (\footnote{The polynomials $A_{k}$
are related to the Bell polynomials \cite{comtet}.}) are given by
\begin{multline*}
A_{2} = \xi_{1}^{2}+\xi_{2}, \quad A_{4} = \xi_1^4 +
6\xi_{1}^{2}\xi_2 + 4\xi_{1}\xi_{3} + 3\xi_2^2 + \xi_4,\\
A_{6} = \xi_1^6 + 15\xi_{1}^{4}\xi_2 + 20\xi_1^3\xi_{3} + 
45\xi_{1}^{2}\xi_2^2 
+ 15\xi_{1}^{2}\xi_4 \\
+ 60\xi_{1}\xi_{2}\xi_{3} + 
6\xi_{1}\xi_5 + 15\xi_2^3 + 15\xi_{2}\xi_4 + 10\xi_3^2 + \xi_6.
\end{multline*}
We claim that \eqref{eq:ipSg} can be interpreted as a generating function for graphs,
inversely weighted by the orders of their automorphism groups.
Namely, we have the following theorem:

\begin{theorem}\label{thm:genfunallG}
We have
\begin{equation}\label{eq:genfunallG}
\ip{\exp S (ux)} = \sum_{\n } u^{|\n |} \sum_{G \in G (\n
)}\frac{\prod_{k} \xi_{k}^{n_{k}}}{|\Aut G|}.
\end{equation}
The corresponding sum over connected graphs is given by $\log \ip{\exp S (ux)}$.
\end{theorem}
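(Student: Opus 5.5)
Expand the exponential and apply Wick's formula term by term. Writing $\exp S(ux) = \prod_{k\ge 1}\exp(\xi_k u^k x^k/k!)$, the coefficient of $\prod_k \xi_k^{n_k}$ is
\[
\prod_k \frac{u^{k n_k} x^{k n_k}}{(k!)^{n_k}\, n_k!},
\]
so with $N=|\n|$ the contribution to $\ip{\exp S(ux)}$ is
\[
\frac{u^{N}\,\ip{x^{N}}}{\prod_k (k!)^{n_k} n_k!}.
\]
By \eqref{eq:gaussianmoment}, $\ip{x^N}$ is the number of perfect matchings of an $N$-element set. We interpret that set as the half-edges of $n_k$ vertices of degree $k$ for each $k$: each vertex of degree $k$ is a "flower" carrying $k$ labeled half-edges. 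A perfect matching of all half-edges glues them into a graph of profile $\n$. Loops arise from matching two half-edges at the same vertex, and parallel edges from several matchings between the same pair of vertices. So $\ip{x^N}$ counts labeled gluings.

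The key step is an orbit–stabilizer argument. Let
\[
\Gamma_\n = \prod_k \bigl(S_k \wr S_{n_k}\bigr),
\]
of order $\prod_k (k!)^{n_k} n_k!$. This group acts on the set of perfect matchings by permuting vertices of equal degree and permuting half-edges at each vertex. Two matchings give isomorphic graphs exactly when they lie in the same orbit, so orbits correspond bijectively to $G(\n)$. The stabilizer of a matching is identified with $\Aut G$ in the paper's sense, since a permutation of vertices and half-edges that preserves the matching is precisely a graph automorphism. This includes the automorphisms swapping parallel edges and flipping loops, which act only on half-edges. Summing $|\text{orbit}|/|\Gamma_\n| = 1/|\text{Stab}|$ over orbits then gives
\[
\frac{\ip{x^N}}{|\Gamma_\n|} = \sum_{G\in G(\n)} \frac{1}{|\Aut G|},
\]
which yields \eqref{eq:genfunallG}.

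The main care is in this stabilizer identification. One has to check that $\Aut G$, defined for multigraphs with loops, is exactly the group of half-edge-level symmetries. This holds by fiat, since the paper defines $\Aut G$ to include the parallel-edge and loop-flip symmetries.

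For the connected statement, use the standard exponential formula. A graph is a multiset of connected components, and its automorphism group is generated by the automorphisms of the components together with permutations of isomorphic components. If $G = \coprod_i H_i^{m_i}$ with the $H_i$ distinct connected graphs, then
\[
|\Aut G| = \prod_i |\Aut H_i|^{m_i}\, m_i!.
\]
The weights $u^{|\n|}\prod_k \xi_k^{n_k}$ are multiplicative over disjoint union. Therefore the full sum equals
\[
\prod_H \sum_{m\ge 0} \frac{w(H)^m}{m!\,|\Aut H|^m} = \exp\Bigl(\sum_{H\ \mathrm{connected}} \frac{w(H)}{|\Aut H|}\Bigr),
\]
where $w(H)$ denotes the weight of $H$ and the empty graph contributes the constant term $1$. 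Taking logarithms gives the connected generating function. All these manipulations are valid in $\Q[\xi_1,\xi_2,\dots]\fps{u}$, since each coefficient of $u$ involves only finitely many graphs, $G(\n)$ being finite.
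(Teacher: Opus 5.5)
Your proposal is correct and takes the same route as the paper's sketch: flowers weighted by $1/k!$, Wick pairings of half-edges, orbit--stabilizer under the group permuting flowers and their half-edges, and then the logarithm for connected graphs. You make explicit what the paper leaves implicit, namely the group $\prod_k S_k\wr S_{n_k}$, the identification of its stabilizers with the half-edge automorphism group (including parallel-edge swaps and loop flips), and the exponential-formula justification for taking the logarithm.
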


\begin{proof}
We give a sketch of the proof, since it helps to understand the
construction of $\sB (u)$ and the proof of Theorem \ref{thm:genfunallW}.  Full details can be found in
\cite{etingof}.  We define a \emph{$k$-flower} $F_{k}$ to be a vertex incident
to $k$ half-edges (see Figure~\ref{fig:flowers}).  A $k$-flower has $k!$ automorphisms, given by
freely permuting the half-edges.  Any graph is formed by taking a
collection of $k$-flowers for various $k$ and choosing a complete
pairing of the half-edges, and the automorphisms of such a graph come
from permuting the flowers of the same degrees and the joined edges.

Thus to build the series on the right of \eqref{eq:genfunallG}, we
must take all possible finite sets of flowers, build all possible edge
pairings, and then weight by the correct order of the automorphism group.
We claim this is exactly what is accomplished by the left of
\eqref{eq:genfunallG}.  Indeed, mark a $k$-flower with the variable
$\xi_{k}$.  Then the exponential series
\[
S (ux) = \sum_{k\geq 1}
\xi_{k}u^{k}\frac{x^{k}}{k!}
\]
represents independently choosing a $k$-flower for each $k$, with $x$
and $u$ both marking the number of available half-edges to pair (see
Figure \ref{fig:flowergluing}).  The
expression 
\[
\exp S(ux) = \exp\Bigl(\sum_{k\geq 1}
\xi_{k}u^{k}\frac{x^{k}}{k!}\Bigr)
\]
then gives the exponential series of all possible choices of finite
sets of flowers.  The $\xi_{i}$ keep track of the different types of
flowers in a finite set, while $x$ and $u$ both keep track of the total
half-edges in a given set.  When we evaluate the expectation 
\[
\ip{\exp S (ux)}
\]
we eliminate $x^{n}$ from each term and replace it with the number of
pairings on $\sets{n}$.  The monomials in the $\xi_{i}$ survive to record the
profile, the power of $u$ records the resulting number of half-edges
(i.e.~twice the number of edges), and the
orbit-stabilizer theorem means that we are left with the correct order
of automorphisms.  This proves \eqref{eq:genfunallG}.  To get the sum
over connected graphs we take the logarithm, which proves the second
statement.

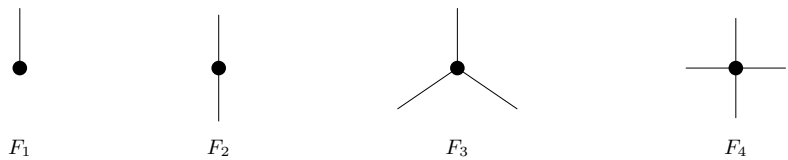
\begin{figure}[htb]
\centering
\resizebox{0.85\textwidth}{!}{
\begin{tikzpicture}[font=\small, line cap=round, line join=round]

\def\r{0.11}      
\def\yDot{-1.0}   
\def\arm{0.9}     
\def\labY{-2.2}   

\def\xone{0}
\def\xtwo{3.0}
\def\xthree{6.6}
\def\xfour{10.8}

\begin{scope}[shift={(\xone,0)}]
  \draw (0,\yDot+\arm) -- (0,\yDot);
  \fill (0,\yDot) circle (\r);
\end{scope}

\begin{scope}[shift={(\xtwo,0)}]
  \draw (0,\yDot+0.8) -- (0,\yDot-0.8);
  \fill (0,\yDot) circle (\r);
\end{scope}

\begin{scope}[shift={(\xthree,0)}]
  \fill (0,\yDot) circle (\r);
  \draw (0,\yDot) -- (0,\yDot+\arm);
  \draw (0,\yDot) -- (-0.9,-0.85-0.765);
  \draw (0,\yDot) -- ( 0.9,-0.85-0.765);
\end{scope}

\begin{scope}[shift={(\xfour,0)}]
  \fill (0,\yDot) circle (\r);
  \draw (0,\yDot+0.75) -- (0,\yDot-0.75);
  \draw (-0.75,\yDot) -- (0.75,\yDot);
\end{scope}

\node at (\xone,\labY)  {$F_1$};
\node at (\xtwo,\labY)  {$F_2$};
\node at (\xthree,\labY) {$F_3$};
\node at (\xfour,\labY) {$F_4$};

\end{tikzpicture}
}
\caption{Some flowers.}
\label{fig:flowers}
\end{figure}

\begin{figure}[htb]
\begin{center}
\includegraphics[scale=0.15]{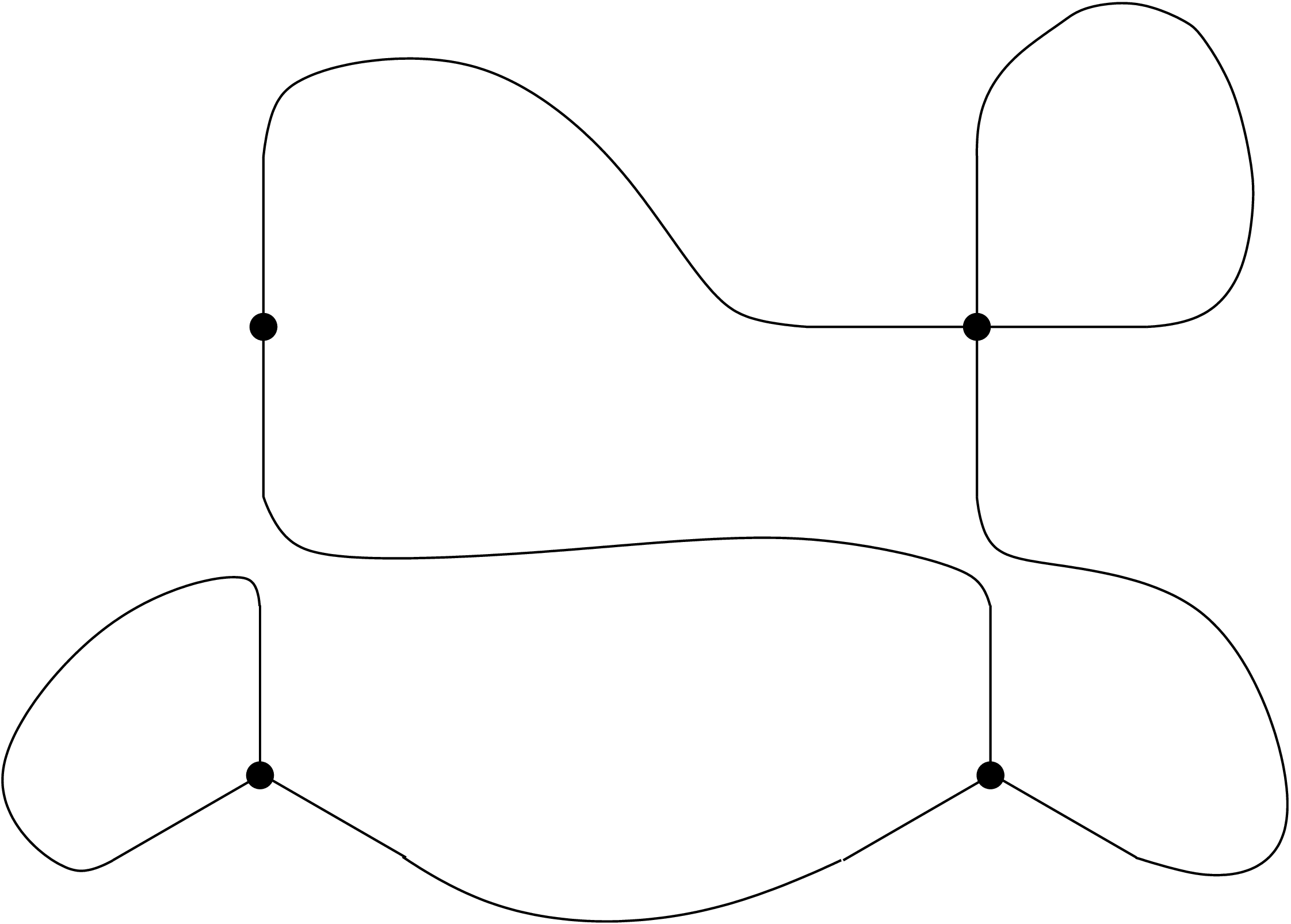}
\end{center}
\caption{Gluing flowers into a graph $G$ with profile $\n =
(0,1,2,1,0,\dotsc)$. We have $|\n| = 1\cdot 2 + 2\cdot 3 + 1\cdot4 =
12$.  The graph has $\Aut G \simeq \Z /2\Z \times \Z /2\Z$, coming
from flipping the two loops.  The corresponding term in the sum $ \log
\ip{\exp S (ux)}$ is $u^{12}\xi_{2}\xi_{3}^{2}\xi_{4}/4$. The full
coefficient of all graphs (including disconnected graphs) with this
profile is $385/128$, which is computed as follows. The expectation
$\ip{x^{12}} = 10395$, so there are this many ways to pair the $12$
half-edges.  Exponentiation produces a denominator of $2!\cdot
2!(3!)^{2}4! = 3456$ for this profile, thus the result is
$10395/3456=385/128$.  \label{fig:flowergluing}}
\end{figure}
\end{proof}

\subsection{}
The proof of Theorem \ref{thm:genfunallG} shows that it is possible to
make many variants of \eqref{eq:genfunallG}.  For instance, if one
wants graphs of a fixed regularity, say $r$, then one can set all
$\xi_{i}=0$ except $\xi_{r}$.  It is not necessary to use the Gaussian
moments, or a probability measure, or even an integral.  One can
define formal ``moments'' by taking an arbitrary linear functional
$\ip{x^{n}} := a_{n}$, $n=1,2,\dotsc $, where $a_{n}$ is in a
commutative ring $A$.  For example, if one fixes $m\geq 2$ and defines
$a_{n}$ to be the number of partitions of $\sets{n}$ into subsets of
order $m$, then the corresponding series $\eqref{eq:genfunallG}$
enumerates $m$-uniform hypergraphs (cf.~\cite{ncontribution}), again
grouped by profile and weighted by inverse automorphism order.

\subsection{}
We now show how to modify the construction of \eqref{eq:genfunallG} to
produce the series $\sB (u)$.  We enlarge the set of flowers
to two types: we take flowers $E_{1}$, $E_{2}$, \dots as before, which
we take to have black vertices.  We also take flowers $F_{1}$,
$F_{2}$, \dots, which we take to have white vertices.  These $k$-flowers
are marked by the variables $\xi_{k}$ as before, independent of
whether they are black or white, and will appear in
the construction weighted by $1/k!$, as before.   

We will also replace the Gaussian measure with a formal ``measure'' that is defined through its ``moments.''
We replace the pairing variable $x$ with some new pairing
variables $b, w, y, z$.  We mark the half-edges of the two
kinds of flowers with these variables.  The half-edges of $E_{k}$ are
marked by any choice of $b$ and $y$, and similarly the half-edges of
$F_{k}$ are marked by any choice of $w$ and $z$.  We modify the
expectation by introducing a formal measure $\ipm{bw}{\phantom{a}}$ on
monomials in the pairing variables corresponding to the following
rules:
\begin{enumerate}
\item Any $b$ (resp.~$w$) variable can be paired only with another $b$
(resp.~$w$) variable.  
\item Any $y$ can only pair with a $z$ and vice versa.
\end{enumerate}
In all cases the measure should count all the possible pairings that
can occur. See Figure ~\ref{fig:formal_measure_explanation} for an illustration. 

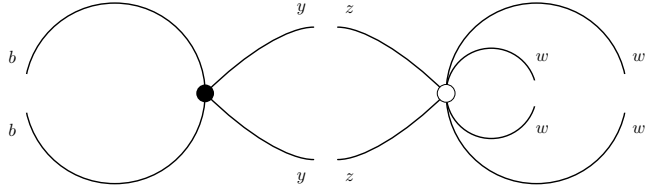
\begin{figure}[htb]
\centering
\resizebox{0.7\textwidth}{!}{
\begin{tikzpicture}[scale=1.2, line cap=round, line join=round]
  \coordinate (L) at (-2,0);
  \coordinate (R) at ( 2,0);

    \node at (-5.2, 0.6) {$b$};
    \node at (-5.2, -0.6) {$b$};

    \node at (5.2, -0.6) {$w$};
    \node at (5.2, 0.6) {$w$};

    \node at (3.6, 0.6) {$w$};
    \node at (3.6, -0.6) {$w$};

    \node at (-0.4, 1.4) {$y$};
    \node at (0.4, 1.4) {$z$};

    \node at (-0.4, -1.4) {$y$};
    \node at (0.4, -1.4) {$z$};

  \fill (L) circle (4.2pt);
  \foreach \n in {1}
    \draw[line width=0.8pt] (-2-1.5/\n,0) circle (1.5/\n);

  \draw[line width=0.8pt]
    (L) .. controls (-1.3,0.7) and (-0.6,1.1) .. (-0.2,1.1);
  \draw[line width=0.8pt]
    (L) .. controls (-1.3,-0.7) and (-0.6,-1.1) .. (-0.2,-1.1);

  \foreach \n in {1,2}
    \draw[line width=0.8pt] (2+1.5/\n,0) circle (1.5/\n);

  \draw[line width=0.8pt]
    (R) .. controls (1.3,0.7) and (0.6,1.1) .. (0.2,1.1);
  \draw[line width=0.8pt]
    (R) .. controls (1.3,-0.7) and (0.6,-1.1) .. (0.2,-1.1);

\draw[fill=white] (R) circle (4.2pt);

\draw[fill=white,draw=none] (-3.5, 0) circle (6.2pt);
\draw[fill=white,draw=none] (-5, 0) circle (9.2pt);

\draw[fill=white,draw=none] (3.5, 0) circle (6.2pt);
\draw[fill=white,draw=none] (5, 0) circle (9.2pt);

\end{tikzpicture}
}
\caption{An example depicting a possible pairing of a black  flower $E_4$ and a white flower $F_6$. The formal measure $\ipbw{b^{2}w^{4}y^{2}z^{2}}$ evaluates to $\ipbw{b^{2}}\ipbw{w^{4}}\ipbw{y^{2}z^{2}} = 1\cdot 3\cdot 2 = 6$.}
\label{fig:formal_measure_explanation}
\end{figure}

This leads to the following:

\begin{definition}\label{def:measure}
We define the $\Z$-valued linear operator $\ipbw{\phantom{a}}$ on the polynomial
ring $\QQ [b,w,y,z]$ as follows:
\begin{enumerate}
\item on monomials $b^{n}w^{m}y^{k}z^{l}$, we
define  $\ipbw{b^{n}w^{m}y^{k}z^{l}} = \ipbw{b^{n}}\ipbw{w^{m}}\ipbw{y^{k}z^{l}}$;
\item we put $\ipbw{b^{n}} = \ipbw{w^{n}} = W_{2} (n)$ if $n$ is even;
$0$ otherwise; and 
\item we put $\ipbw{y^{k}z^{l}} = 0$ unless $k=l$, in which case
$\ipbw{y^{k}z^{k}}=k!$; and 
\item we extend linearly to all polynomials in $b,w,y,z$.
\end{enumerate}

\end{definition}

We are now ready for the main result of this section: 

\begin{theorem}\label{thm:genfunallW}
For $n\geq 1$, let $p (n) = 1$ if $n$ is even and $t$ if $n$ is odd.
Define $\bF (n) \in \Z [b, w, y, z, t]$ by  
\begin{equation}\label{eq:bigflower}
\bF (n) = \Bigl((b+y)^{n} + \sum_{i=0}^{n}\binom{n}{i}p (i) w^{n-i}
z^{i}\Bigr).
\end{equation}
Put 
\begin{equation}\label{eq:partition}
S (u) = \sum_{k\geq 1} \bF (k)\frac{\xi_{k}u^{k}}{k!} \in R [b, w, y, z, t] \fps{u}
\end{equation}
Then 
\begin{equation}\label{eq:computemoment}
\ipbw{\exp S (u)} = \sum_{\n } u^{|\n |} \sum_{G \in G (\n
)}\frac{W_{G}(t)\prod_{k} \xi_{k}^{n_{k}}}{|\Aut G|},
\end{equation}
where the expectation $\ipbw{\phantom{a}}$ is given in Definition
\ref{def:measure}, 
and we have 
\[
\sB (u) = \log \ipbw{\exp S (u)}.
\]
\end{theorem}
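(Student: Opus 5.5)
The plan is to run the flower-gluing argument from the proof of Theorem \ref{thm:genfunallG}, but with two-colored flowers. Each coloring-weighted graph should appear as a term in the expansion of $\ipbw{\exp S(u)}$ exactly once per coloring, with weight $1/|\Aut G|$. The logarithm statement then follows as before.

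\textbf{Step 1: expand each flower polynomial.} First I expand $\bF(k)\xi_k u^k/k!$ and read off its terms.

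The term $(b+y)^k$ is a black $k$-flower $E_k$. Expanding it assigns to each of its $k$ labelled half-edges either a $b$ or a $y$.

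The term $\sum_i\binom{k}{i}p(i)w^{k-i}z^i$ is a white $k$-flower $F_k$. Here each half-edge is labelled $w$ or $z$. The factor $\binom{k}{i}$ counts the choices of which $i$ half-edges are labelled $z$, and the whole term is weighted by $p(i)$.

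Expanding $\exp S(u)$ therefore produces finite multisets of black and white flowers. Every half-edge carries a label, and each flower of degree $k$ carries the factor $\xi_k u^k/k!$. This matches the exponential-structure bookkeeping in the proof of Theorem \ref{thm:genfunallG}.

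\textbf{Step 2: identify what $\ipbw{\cdot}$ counts.} By Definition \ref{def:measure}, applying $\ipbw{\cdot}$ counts three kinds of pairings:
\begin{itemize}
\item perfect matchings among the $b$-half-edges, which give black--black edges;
\item perfect matchings among the $w$-half-edges, which give white--white edges;
\item bijections between the $y$- and $z$-half-edges, which give black--white edges. This explains $\ipbw{y^kz^k}=k!$.
\end{itemize}

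Conversely, fix a graph $G$ (loops and parallel edges allowed) together with a coloring $c$. Then there is exactly one way to label its half-edges so that it arises in this way: a half-edge at a black vertex gets $b$ if the other end is black and $y$ otherwise, and similarly $w$ or $z$ at a white vertex. A loop at a vertex has both ends of the same color, so it always uses $b$ or $w$ and never $y,z$. This is consistent with loops not affecting parity.

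\textbf{Step 3: compute the weight attached to $(G,c)$.} At a white vertex $v$, the number $i$ of $z$-labels is
\[
\sum_j m_j,
\]
the number of edges (counted with multiplicity) joining $v$ to black neighbours. So the factor $p(i)$ depends only on the parity of this count, which is exactly the parity appearing in the admissibility rule. The flower terms $(b+y)^n$ contribute no power of $t$ at black vertices. Hence the total $t$-weight of $(G,c)$ is a product of local factors, each determined by the color of the vertex and the parity of its black-neighbour edge count.

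The orbit--stabilizer argument of Theorem \ref{thm:genfunallG} then applies verbatim, now to the group permuting same-type flowers, their half-edges, and parallel edges while preserving the coloring. It shows that the labelled gluings assemble with total coefficient $1/|\Aut G|$ per graph. Summing over the $2^{|G|}$ colorings reproduces the sum over $c\in\sC(G)$ in \eqref{eq:defofW}.

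\textbf{Main obstacle: matching the weights to admissibility.} The delicate point is the last step: checking that the local weights given by $p$ and the black term agree with the admissibility convention of Definition \ref{def:defofW}.

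I would first verify this on $P_1$ and $P_2$ against the values $t+1$ and $t^2+3$ from \S\ref{s:intro}. Read literally, the formula puts the factor $t$ on white vertices with an odd black-edge count and no $t$ on black vertices. So each coloring receives $t$ raised to the number of \emph{non}-admissible vertices, and the sum over colorings is $t^{|G|}W_G(1/t)$.

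If the $P_1$ and $P_2$checks confirm this reading, I would carry out the proof as above for that reading of $W_G(t)$. I would then record that the formula \eqref{eq:computemoment} holds precisely for the weight convention encoded by $p$, making the convention explicit in the statement rather than tacitly switching to the standard $W_G(t)$.

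The remaining ingredients are routine:
\begin{itemize}
\item the exponential formula gives the connected sum $\sB(u)=\log\ipbw{\exp S(u)}$;
\item the weight is multiplicative over connected components, because both the coloring and the admissibility condition are determined component by component.
\end{itemize}
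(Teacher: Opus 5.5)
Your Steps 1--2 are exactly the paper's argument and are fine: black and white flowers with $b/y$ and $w/z$ labels, the labels forced by the coloured graph, loops never carrying $y$ or $z$, and the $z$-count at a white vertex equal to its black edge multiplicity. The gap is in the weight-matching step, which you flag but leave conditional, and which your fallback resolves the wrong way.

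Your computation that the printed $p$ yields $t^{|G|}W_G(1/t)$ is correct relative to Definition~\ref{def:defofW} as printed. But that definition is itself inconsistent with the values $W_{P_1}=t+1$ and $W_{P_2}=t^2+3$ you intend to check against; it gives $2t$ and $2t^2+2t$. The convention the paper actually uses is $W_G(t)=\sum_{c\in\sC(G)}t^{|W_+(c)|}$. It appears in the abstract, in the specialization $b\gets 1$, $w_-\gets 1$, $w_+\gets t$ of $\fullW_G$, and in every example. Under it, black vertices carry no $t$, and a white vertex carries $t$ when it has an \emph{even} number of black neighbours. Accordingly, the paper's proof marks a white flower with $t$ when its number of $z$'s is even. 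So the printed $p$ is simply swapped. It should be $p(i)=t$ for $i$ even, including $i=0$ (which the sum needs), and $p(i)=1$ for $i$ odd.

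With that $p$, your Step~3 gives each coloured graph the weight $\prod_{v\ \text{white}}p(i_v)=t^{|W_+(c)|}$. The theorem, including $\sB(u)=\log\ipbw{\exp S(u)}$, then follows as you outline. For instance, $\tfrac12\ipbw{(b+y+tw+z)^2}=\tfrac12(t^2+3)$, as in Example~\ref{ex:oneonly}, whereas the printed $p$ gives $\tfrac12\ipbw{(b+y+w+tz)^2}=t+1$.

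Your planned $P_2$ check would therefore not confirm your reading. The printed $p$ produces $2+2t$, which is neither $t^2+3$ nor, with that value, $t^2W_{P_2}(1/t)=1+3t^2$. $P_1$ cannot be checked at all, since an isolated vertex has no half-edges and there is no degree-$0$ flower. Your fallback of keeping the printed $p$ and redefining the right-hand side only proves the identity for $\sum_c t^{|W_-(c)|}$. That is not the series $\sB(u)$ of Definition~\ref{def:bigseries}.

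A smaller point concerns the automorphism count. The gluing count gives $\mathrm{wt}(G,c)/|\Aut(G,c)|$ for each isomorphism class of coloured graphs, where $\Aut(G,c)$ is the colour-preserving subgroup. Turning this into $\frac{1}{|\Aut G|}\sum_{c\in\sC(G)}\mathrm{wt}(G,c)$ needs a second orbit--stabilizer step, for $\Aut G$ acting on $\sC(G)$. Your phrase ``total coefficient $1/|\Aut G|$ per graph'' skips it.
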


\begin{proof}
The proof is very similar to that of Theorem \ref{thm:genfunallG}.
First of all, the expression \eqref{eq:bigflower} corresponds to all
possibilities of the
following construction:
\begin{enumerate}
\item Choose either the black flower $E_{n}$ or the white flower $F_{n}$.
\item If $E_{n}$, then make a choice of marking each of its $n$ half-edges with
either a $b$ or a $y$.
\item If $F_{n}$, then make a choice of marking each of its $n$ half-edges with
either a $w$ or a $z$.  If the number of $z$s appearing is even, then
mark this $F_{n}$ with a $t$.
\end{enumerate}

Next, the expression \eqref{eq:partition} corresponds to performing
the construction in \eqref{eq:bigflower} for each possible number of
half-edges.  The flowers are then marked by $u$ (respectively, $\xi_{k}$) to
record the total number of half-edges (resp., the degree).
Each term is divided by $k!$ to account for the automorphisms of a flower.

Finally, the construction on the left of \eqref{eq:computemoment} does the
following.  The exponential function produces all possible
combinations of the marked flowers from the previous steps.  When one
computes the moment $\ipbw{\phantom{a}}$ using Definition \ref{def:measure}, one counts the possible ways
to join the flowers together.   Flowers may be 
joined to each other indiscriminately.  The pairings involving $b$
(respectively, $w$) join black to black (resp.,~white to white) and so
do not contribute any power of $t$ to the polynomials $W$.  Such
contributions come from joining a black flower to a white flower, and
only occur if a white flower has been joined to an even number of
black flowers (this is the role of the function $p$: it ensures that white flowers are only joined to an even number of black flowers).   The same
discussion as in the proof of Theorem \ref{thm:genfunallG} shows that
\eqref{eq:computemoment} holds.  Finally the series $\sB (u)$, which
only sums over connected graphs, arises by taking the logarithm of
\eqref{eq:computemoment}. This completes the proof.
\end{proof}

We conclude with some examples.  We set almost all of the $\xi_{k}$ to
$0$, i.e.~we put restrictions on the possible degrees of flowers, to simplify the presentation.

\begin{example}\label{ex:oneonly} 
If we set all but $\xi_{1}$ to $0$, then we obtain all graphs where
every vertex has degree exactly 1.  These correspond to unions of
path graphs $P_{2}$; in the quantum literature such graphs are known as \emph{Bell pairs}.   The resulting series is 
\begin{align*}
\ipbw{\exp (\bF (1)) }&= 1 + (t^2 + 3)\frac{\xi_{1}^{2}u^2}{2} + (t^4
+ 6t^2 + 9)\frac{\xi_{1}^{4}u^4}{8} + \dotsb \\
&= \sum_{k\geq 0} (t^{2}+3)^{k}\frac{\xi_{1}^{2k}u^{2k}}{2^{k}k!}.
\end{align*}
The $W$-polynomial $(t^{2}+3)^{k}$ corresponds to $k$ copies of
$P_{2}$.  The $\xi_{1}^{2k}$ records that $2k$ flowers of degree $1$
were used to build these path graphs, and the $u^{2k}$ records that we
glued together $2k$ half-edges in total.  Finally the denominator
$2^{k}k!$ indicates that such a graph has automorphisms corresponding
to independently flipping the $P_{2}$s and permuting them.

The series of connected graphs only has one term, namely
$(t^{2}+3)\xi_{1}^{2}u^{2}/2$.  This corresponds to a single copy of
$P_{2}$. We get $(t^{2}+3)/2$ since
$W_{P_{2}} (t) = t^{2}+3$ and $|\Aut P_{2}| = 2$.
\end{example}

\begin{example}\label{ex:onetwoonly} 
If we set all but $\xi_{1}$ and $\xi_{2}$ to $0$, then we obtain all graphs where
every vertex has degree either 1 or 2.  This means we can build
disjoint unions of path graphs and cycles.  The resulting series of
connected graphs is 
\begin{multline*}
\log \ipbw{\exp (\bF (1)) + \bF (2)} = \\
((t^2+3)\xi_1^2 + (t+1)\xi_2) \frac{u^2}{2} \\
%
+((2t^{3}+6t+2)\xi_{1}^{2}\xi_{2} + (t^{2}+2t+1) \xi_{2}^{2}) \frac{u^{4}}{4}\\
+((3t^{4}+6t^{2}+24t +
15)\xi_{1}^{2}\xi_{2}^{2}+(t^{3}+3t+4)\xi_{2}^{3})\frac{u^6}{6} + \dotsb 
\end{multline*}
For example, in the $u^{2}$ term the coefficient of $\xi_{1}^{2}$
corresponds to the path graph $P_{2}$.    Similarly the
coefficient of $\xi_{2}$
corresponds to a $1$-cycle, i.e.~a vertex with a single loop.
\end{example}

\begin{example}\label{ex:flower3}
Suppose we set all $\xi_{i}=0$ except $\xi_{3}$.  Then all vertices
have degree $3$.  The series begins
\begin{multline*}
\log \ipbw{\exp (\bF (3))} =
\frac{5}{24}\bigl(t^{2}+3\bigr)\xi_3^2u^{6} + \frac{5}{16}\bigl(t^4
+24t^{2} + 3\bigr)\xi_{3}^4u^{12} + \dotsb 
\end{multline*}
The coefficient of $\xi_{3}^{2}u^{6}$ comes from the two connected
graphs with two vertices of degree 3, which are the graphs $G_{1}$ and
$G_{3}$ in Figure \ref{fig:gamma}.  Both have $W$-polynomial
$t^{2}+3$, and we have $1/12+1/8=5/24$.  For the coefficient of
$\xi_{3}^{4}u^{12}$, there are five connected graphs; their
automorphism groups have orders $24$, $48$, $16$, $16$, and $8$ (the
first is the complete graph $K_{4}$).  We leave the pleasure of
finding them, and computing their $W$-polynomials, to the reader.
\end{example}
 
\section{Wright series of black-white polynomials}\label{s:wright}
\subsection{}
In this section we explain how to produce generating functions of
$W$-polynomials for connected graphs of a given loop number $g \geq
0$, that is, connected graphs whose first Betti number (rank of
$H_{1}$) is $g$.  As in Section \ref{s:feynman}, we allow graphs to have
loops and parallel edge sets.  We begin by recalling work of Wright
\cite{wright1}.  For more other expositions of this material, we
refer to Flajolet--Sedgewick \cite[II.5]{fs} and
Dubrovin--Yang--Zagier \cite[\S 3.1]{dyz}.

\subsection{}
For $g\geq 0$ let $\Gamma(g)$ be the set of connected graphs $G$ with
first Betti number $g$.  Let $v (G)$ (respectively, $e (G)$) be the
number of vertices (resp.,~edges) of $G$. Let $\Aut G$ be the full
automorphism group of $G$ generated by permuting vertices and parallel
edges, as well as flipping and permuting loops.  Then Wright considered
the series 
\begin{equation}\label{eq:Aser}
\sA_{g} (x) = \sum_{v\geq 1} \sum_{G\in \Gamma (g)} \frac{x^{v (G)} }{|\Aut G|}.
\end{equation}
For example
\begin{align*}
\sA_{0} &= x + x^2/2 + x^3/3 + 2x^4/3 + 25x^5/24 + 9x^6/5 + 2401x^7/720 + \dotsb, \\
\sA_{1} &= x/2 + 3x^2/4 + 17x^3/12 + 71x^4/24 + 523x^5/80 + 899x^6/60 + \dotsb, \\
\sA_{2} &= x/8 + 7x^2/12 + 101x^3/48 + 83x^4/12 + 12487x^5/576 + 3961x^6/60 + \dotsb .
\end{align*}

Wright showed that for $g\geq 1$ the series \eqref{eq:Aser} can be
computed in terms of what is essentially the series for the $g=0$
case.  More precisely, let $\sL$ be defined through the recursive
relation 
\begin{equation}\label{eq:lfunction}
\sL = x \exp (\sL),
\end{equation}
or alternatively as the the inverse
series of $xe^{-x}$:
\begin{equation}\label{eq:Tintro}
\sL (x)  = x+x^{2}+3x^{3}/2 + 8x^{4}/3 + \dotsb .
\end{equation}
The series $\sL$, which is closely related to the Lambert function, is
the generating function of \emph{rooted trees} weighted by the inverse
orders of their automorphism groups.  It can be computed to any
desired precision by solving iteratively, or can even be computed
exactly using Lagrange inversion. This is almost the series $\sA_0$,
which enumerates unrooted trees; in fact we have
\begin{equation}\label{eq:W0}
\sA_{0} (x) = \int x^{-1}\sL (x)\,dx.
\end{equation}

All the other $\sA_g$ can be given in terms of $\sL$.  Let
$\Gamma_{\geq 3} (g) \subset \Gamma (g)$ be the subset of connected
loop number $g$ graphs with all vertices having degree at least $3$.
It is easy to see that this set is finite for all $g$.  Then if $g\geq
2$, we have
\begin{equation}\label{eq:Wgintro}
\sA_{g} (x) = \sum_{G\in \Gamma_{\geq 3} (g)} \frac{1}{|\Aut G|}\frac{\sL^{v
(G)}}{(1-\sL)^{e (G)}}, \quad g\geq 2.
\end{equation}
Thus $\sA_{g}$, $g\geq 2$ is a \emph{rational function} in the tree
series $\sL$.  This is almost as good as the series being a rational
function in $x$, but of course is not the same.(\footnote{One doesn't
expect $\sA_g$ to be a rational function in $x$.})

\subsection{}
The expression \ref{eq:Wgintro} has a simple geometric interpretation.
A graph $H\in \Gamma (g)$ determines a unique graph $G\in \Gamma_{\geq
3} (g)$: one simply deletes all vertices of degree $1$ until none
remain, and then deletes all vertices of degree $2$ and replaces them
with an edge joining their neighbors (see Figure \ref{fig:reduction}).
We call $G$ the \emph{reduction} of $H$.  Conversely, any graph in $\Gamma
(g)$ can be obtained by reversing this process.  This is exactly
captured in the expression \ref{eq:Wgintro}:
\begin{enumerate}
\item We sum over all possible reductions $G$.
\item For each $G$ we graft an arbitrary rooted tree onto any of its
vertices.  This corresponds to the factor $\sL^{v (G)}$ in the
numerator.  
\item For each $G$ we can also freely subdivide its edges and then
graft an arbitrary rooted tree onto any of the new edges.  This is the
factor $1/(1-\sL)^{e (G)} = (1+\sL +\sL^{2} + \dotsb )^{e (G)}$.
\end{enumerate}

\begin{figure}[htb]
\centering
\resizebox{0.7\textwidth}{!}{
\begin{tikzpicture}[scale=0.8, line cap=round, line join=round]
\tikzset{
  v/.style={circle, fill=black, draw=black, inner sep=1.6pt},
  e/.style={line width=0.9pt, draw=black},
  arr/.style={->, line width=1.1pt, draw=black}
}

\begin{scope}[shift={(0,0)}]
  \coordinate (L)  at (-2,0);
  \coordinate (TL) at (-1,1);
  \coordinate (TC) at (0,1.3);
  \coordinate (TR) at ( 1,1);
  \coordinate (R)  at ( 2,0);
  \coordinate (BR) at ( 1,-1);
  \coordinate (BL) at (-1,-1);
  \coordinate (M)  at ( 0,0);

  \draw[e] (L)--(TL)--(TC)--(TR)--(R)--(BR)--(BL)--cycle;
  \draw[e] (L)--(M)--(R);

  \coordinate (L1)  at (-3,0);
  \coordinate (L1a) at (-4,0.6);
  \coordinate (L1b) at (-4,-0.6);
  \coordinate (L2)  at (-3,1.2);
  \coordinate (L3)  at (-3,-1.2);
  \draw[e] (L)--(L1) (L1)--(L1a) (L1)--(L1b);
  \draw[e] (L)--(L2) (L)--(L3);

  \coordinate (T1)  at (-1.6,1.7);
  \coordinate (T2)  at (-2.4,2.4);
  \coordinate (T3)  at (-1.5,2.6);
  \coordinate (T4)  at (-0.5,2.2);
  \draw[e] (TL)--(T1) (T1)--(T2) (T1)--(T3) (T1)--(T4) (T4);

  \coordinate (TC1) at (0.0,1.95);
  \coordinate (TC2) at (0.0,2.75);
  \coordinate (TC3) at (-0.5,3.25);
  \coordinate (TC4) at (0.5,3.25);
  \coordinate (TC5) at (1.2,3.55);
  \draw[e] (TC)--(TC1) (TC1)--(TC2) (TC2)--(TC3) (TC2)--(TC4);
 \draw[e] (TC4)--(TC5);

\coordinate (TC12) at (0.6,2.25);
\coordinate (TC13) at (0.9,2.85);
\coordinate (TC14) at (1.12,2.05);
  \draw[e] (TC1)--(TC12)--(TC13);
  \draw[e] (TC12)--(TC14);

  \coordinate (RT1) at (1.9,1.0);
  \coordinate (RT2) at (2.6,1.4);
  \draw[e] (TR)--(RT1)--(RT2);

  \coordinate (R1) at (3.1,0.2);
  \coordinate (R2) at (3.1,-0.2);
  \draw[e] (R)--(R1) (R)--(R2);

  \coordinate (B1) at (1,-2.0);
  \coordinate (B2) at (2,-2.0);
  \coordinate (B3) at (1.6,-2.8);
  \coordinate (B4) at (1.6,-3.6);
  \draw[e] (BR)--(B1)--(B2);
  \draw[e] (B1)--(B3)--(B4);

  \coordinate (BRL) at (2.6,-1.2);
  \draw[e] (BR)--(BRL);

  \coordinate (BL1) at (-1.5,-1.8);
  \coordinate (BL2) at (-2.2,-2.2);
  \draw[e] (BL)--(BL1)--(BL2);


  \foreach \p in {L,TL,TR,R,BR,BL,M} \node[v] at (\p) {};
  \foreach \p in {L1,L1a,L1b,L2,L3,T1,T2,T3,T4,TC1,TC2,TC3,TC4,RT1,RT2,R1,R2,B1,B2,B3,B4,BRL,BL1,BL2, TC5, TC, TC12, TC13, TC14}
    \node[v] at (\p) {};
        \node[scale=1.1] at (0,-2) {$H$};
\end{scope}

\draw[arr] (4,0) -- (5.0,0);

\begin{scope}[shift={(7.6,0)}]
  \coordinate (L)  at (-2,0);
  \coordinate (TL) at (-1,1);
  \coordinate (TC) at (0,1.3);
  \coordinate (TR) at ( 1,1);
  \coordinate (R)  at ( 2,0);
  \coordinate (BR) at ( 1,-1);
  \coordinate (BL) at (-1,-1);
  \coordinate (M)  at ( 0,0);

  \draw[e] (L)--(TL)--(TC)--(TR)--(R)--(BR)--(BL)--cycle;
  \draw[e] (L)--(M)--(R);

  \foreach \p in {L,TL,TR,R,BR,BL,M, TC} \node[v] at (\p) {};
      \node[scale=1.1] at (0,-2) {$G'$};
\end{scope}

\draw[arr] (10.2,0) -- (11.2,0);

\begin{scope}[shift={(12.9,0)}]
  \coordinate (A) at (-1.2,0);
  \coordinate (B) at ( 1.2,0);

  \draw[e] (A)--(B);                          
  \draw[e] (A) to[bend left=40]  (B);         
  \draw[e] (A) to[bend right=40] (B);         

  \node[v] at (A) {};
  \node[v] at (B) {};

    \node[scale=1.1] at (0,-2) {$G$};
\end{scope}

\end{tikzpicture}
}
\caption{A loop number $g=2$ example: reducing a graph $H\in \Gamma (2)$ to $G\in \Gamma_{\geq 3}$.}
\label{fig:reduction}
\end{figure}
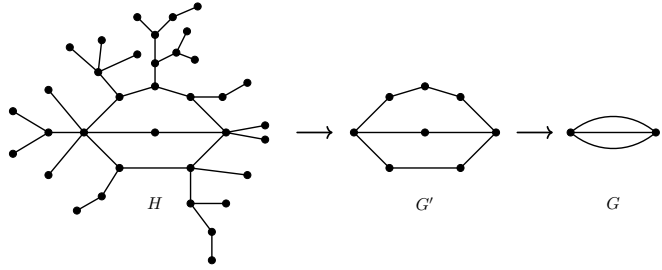

Finally, the loop number $1$ case is exceptional. A connected graph $G$ with
loop number $1$ consists of a unique $k$-cycle $C_{k}$ and with rooted
trees attached to its vertices.  One can show that this leads to the
expression
\begin{equation}\label{eq:W1}
\sA_{1} (x) = \frac{1}{2}\sum_{k\geq 1} \frac{\sL^{k}}{k} =
\frac{1}{2}\log \Bigl(\frac{1}{1-\sL}\Bigr).
\end{equation}
Thus all the series $\sA_g$ can be built in a simple way from $\sL$.

\subsection{} Our goal in this section is to compute series akin to
Wright's but for the polynomials $W_{G} (t)$.  In particular we define
\[
\sW_{g} (x) = \sum_{G\in \Gamma (g)} \frac{W_{G} (t)
x^{v (G)} }{|\Aut G|} \in \QQ [t] \sets{x}.
\]
We follow the same strategy as Wright:
\begin{enumerate}
\item The first result is Theorem \ref{thm:loopnumber0}: we consider $g=0$ and define the analogue of the Lambert
series.  In fact we define three different series, depending on
whether the root is colored black or white plus some additional data.
Unlike the classical case, we cannot find explicit expressions for
these series.  However, we can easily evaluate them to any desired
precision using an iterative scheme.  Passing from this series to
$\sW_{0}$ is the same as \eqref{eq:W0}.
\item The next result is Theorem \ref{thm:loopnumber1}, the case $g=1$.  Again we build such graphs by
grafting trees onto a cycle, and the fact that the cycle graphs form a
rational family plays a key role.  In fact to carry this out we
slightly generalize the results of \S \ref{s:rationalfamilies} to
multivariate rational expressions.
\item Finally we consider the case $g\geq 2$ and prove Theorem \ref{thm:loopnumber2}.  Again we follow
Wright's method and show that the whole story boils down to
subdividing and grafting
trees onto graphs in the finite sets $\Gamma_{\geq 3} (g)$. 
\end{enumerate}

\subsection{}
We begin by defining the analogue of the rooted tree series $\sL$.  We define
\[
\sT (x) = \sum_{T} \frac{W_{T} (t)
x^{v (T)} }{|\Aut T|} \in \QQ [t] \sets{x},
\]
where the sum is taken over all rooted trees up to isomorphism.  Note
that here we only take the automorphisms of $T$ as a rooted tree,
meaning any automorphism must preserve the root. 

We will build $\sT$ by attaching sets of colored rooted trees to a new
root, just like in the functional equation \eqref{eq:lfunction}.
However to do this we need a refinement of $\sT$, since we must keep
track of the color of the new root, and the colors of the roots of the
old trees.  We define a decomposition
\begin{equation}\label{eq:tdecomp}
\sT = \sT_{w}^{+} + \sT_{w}^{-} + \sT_{b}
\end{equation}
where 
\begin{itemize}
\item $\sT_{w}^{+}$ consists of the contributions to $\sT$ of rooted
trees equipped with a coloring where the root is white and an
\emph{even} number of children are black; 
\item $\sT_{w}^{-} $ consists of the contributions to $\sT$ of rooted
trees equipped with a coloring where the root is white and an
\emph{odd} number of children are black; and
\item $\sT_{b}$ consists of rooted trees equipped with a coloring
where the root is black (there is no condition on children in this
case).
\end{itemize}

\subsection{}
Now we build a system of equations satisfied by the three generating
functions on the right of \eqref{eq:tdecomp}.  This is the
generalization of implicitly defining the Lambert series $\sL$ through
the relation \eqref{eq:lfunction}.  In fact, the expression
\eqref{eq:lfunction} is equivalent to the following combinatorial specification:
\emph{``Any rooted tree $(\sL )$ is obtained by attaching an arbitrary set
of rooted trees $(\exp \sL)$ to a new root $(x)$, where we join the
new root to the previous roots."}  We want to consider a similar
specification for the $W$-polynomials of trees in $\sT$.

We consider assembling a colored rooted tree by starting with a vertex
and joining it to a collection of rooted trees drawn from the three
types $\sT_{w}^{+}, \sT_{w}^{-}, \sT_{b}$.  The new root can either be
black or white, and how the polynomial $W$ changes, and which type we
build, depends on how we construct the new tree.

\begin{itemize}
\item To make a tree in $\sT_{w}^{+}$, we make the new root white,
attach trees arbitrarily from $\sT_{w}^{+}$ and $\sT_{w}^{-}$, but we
must attach an \emph{even} number of trees from $\sT_{b}$.  The
polynomial of the new tree gets multiplied by $t$ (since that is the
contribution of the new white root to the $W$-polynomial), and we
obtain the recursive relation
\[
\sT_{w}^{+} = txe^{\sT_{w}^{+}}e^{\sT_{w}^{-}}\cosh (\sT_{b}).
\]
Here we use $\cosh x = (e^{x}+e^{-x})/2$ since it is the even part of
the exponential function.
\item To make a tree in $\sT_{w}^{-}$, again we make the new root
white, attach trees arbitrarily from $\sT_{w}^{+}$ and $\sT_{w}^{-}$,
but we now must attach an \emph{odd} number of trees from $\sT_{b}$.
This time the polynomial of the new tree does not get multiplied by
$t$ (because the new root is now odd), and we obtain the recursive
relation
\[
\sT_{w}^{-} = xe^{\sT_{w}^{+}}e^{\sT_{w}^{-}}\sinh (\sT_{b}),
\]
where we use $\sinh x = (e^{x}-e^{-x})/2$ since it is the odd part of the exponential
function.
\item Finally to make a tree in $\sT_{b}$, we can join trees
arbitrarily from $\sT_{w}^{\pm}$, $\sT_{b}$ to a black root, but the
polynomial changes depending on how we select.  Each tree we pick from
$\sT_{w}^{+}$ (respectively, $\sT_{w}^{-}$) changes the polynomial by
$t^{-1}$ (resp.~$t$) since we change the parities of the neighborhoods
of the previous white roots.  This leads to the recursive relation
\[
\sT_{b} = x e^{t^{-1}\sT_{w}^{+} + t\sT_{w}^{-} + \sT_{b}}.
\]
\end{itemize}

Altogether we obtain a system of exponential generating functions:
\begin{equation*}
\left(\begin{array}{c}
\sT_{w}^{+}\\
\sT_{w}^{-}\\
\sT_{b}
\end{array} \right) = x \left(\begin{array}{c}
te^{\sT_{w}^{+}}e^{\sT_{w}^{-}}\cosh (\sT_{b})\\
e^{\sT_{w}^{+}}e^{\sT_{w}^{-}}\sinh (\sT_{b})\\
e^{t^{-1}\sT_{w}^{+} + t\sT_{w}^{-} + \sT_{b}}
\end{array} \right).
\end{equation*}
This can be solved iteratively, starting with the initial vector
\begin{equation}\label{eq:initialvector}
\left(\begin{array}{c}
tx\\
x^{2}\\
x
\end{array} \right)
\end{equation}
corresponding to the rooted colored trees in Figure \ref{fig:initialvector}.
We summarize these results in the following theorem:

\begin{theorem}\label{thm:loopnumber0}
Define a sequence of vectors $(a_{i},b_{i},c_{i})$, $i\geq 0$ by (i) $
(a_{0}, b_{0}, c_{0}) =  (tx,
x^{2}, x)$ and (ii) for $i>0$
\begin{itemize}
\item $a_{i} = t x \exp (a_{i-1} + b_{i-1})\cosh (c_{i-1})$,
\item $b_{i} = x \exp (a_{i-1} + b_{i-1})\sinh (c_{i-1})$, and 
\item $c_{i} = x \exp (t^{-1} a_{i-1} + t b_{i-1} + c_{i-1})$.
\end{itemize}
Then the sequence $(a_{i},b_{i},c_{i})$ converges to a vector of
formal power series $(a_{\infty}, b_{\infty}, c_{\infty})$ in $x$ with
coefficients rational polynomials in $t$.  We have
$\sT_{w}^{+}=a_{\infty}$, $\sT_{w}^{-} = b_\infty$, $\sT_{b} =
c_{\infty}$, and $\sT = a_{\infty}+b_{\infty}+c_{\infty}$.
\end{theorem}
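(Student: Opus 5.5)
The plan is to treat the system as a fixed-point equation for an operator on triples of formal power series, and to handle two points separately: convergence of the iteration, and identification of the limit with $(\sT_{w}^{+},\sT_{w}^{-},\sT_{b})$.

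First, convergence. Let $\Phi(a,b,c)=(tx e^{a+b}\cosh c,\ x e^{a+b}\sinh c,\ x e^{t^{-1}a+tb+c})$, acting on triples of series with zero constant term and coefficients in $\Z[t,t^{-1}]\otimes\QQ$. Each component carries an explicit factor $x$. Hence if two triples agree modulo $x^{N}$, their images agree modulo $x^{N+1}$, since $e^{a}$, $\cosh c$ and $\sinh c$ modulo $x^{N}$ depend only on $a,c$ modulo $x^{N}$. So $\Phi$ is a contraction for the $x$-adic metric. Starting from $(tx,x^{2},x)$, the iterates stabilize coefficientwise: the coefficient of $x^{N}$ is fixed after about $N$ steps. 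The limit $(a_\infty,b_\infty,c_\infty)$ is therefore the unique fixed point of $\Phi$ with zero constant term, and the choice of initial vector is irrelevant. It only speeds up convergence.

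Second, I will show that the combinatorial series satisfy the same system; uniqueness then gives $\sT_{w}^{+}=a_\infty$, and similarly for the other two. By orbit--stabilizer I rewrite $\sT$ as a sum over isomorphism classes of \emph{colored} rooted trees $(T,c)$, each weighted by $t^{|\alpha(c)|}x^{v(T)}/|\Aut(T,c)|$. Here admissibility of the root is computed from its children, and for a non-root vertex it also counts its parent. Equivalently, I pass to labelled colored rooted trees weighted by $1/v!$. The exponential formula then says that attaching an arbitrary set of labelled trees from a class with egf $F$ to a new root gives $x e^{F}$. Restricting the number of children drawn from a class to be even or odd gives $\cosh$ or $\sinh$ of that class's egf.

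The only nonformal point is checking how the $W$-weight changes when a subtree is attached below a new root. The admissibility of the vertices strictly inside a subtree is unchanged. The new root's admissibility depends on its color and on the number of black children. This yields the factor $t$ in the $\sT_{w}^{+}$ equation and no factor in the $\sT_{w}^{-}$ equation. The old root of a subtree gains the new root as a neighbor. If the new root is white, nothing changes. If it is black, a white old root flips parity: an even root loses its $t$, giving the substitution $\sT_{w}^{+}\mapsto t^{-1}\sT_{w}^{+}$, and an odd root gains a $t$, giving $\sT_{w}^{-}\mapsto t\sT_{w}^{-}$. A black old root is unaffected. This is why $\sT_{w}^{\pm}$ had to be defined by the parity of black \emph{children} only.

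Finally, I need the coefficients to be polynomials in $t$ rather than Laurent polynomials. Every tree in $\sT_{w}^{+}$ has an admissible white root, so $\sT_{w}^{+}\in t\,\QQ[t]\sets{x}$. Combinatorially, then, $t^{-1}\sT_{w}^{+}$ has polynomial coefficients. The same holds inductively for every iterate, because $a_i$ carries an explicit factor $t$. I expect the main obstacle to be stating cleanly the passage from $W_T/|\Aut T|$ for uncolored rooted trees to the labelled colored species where the exponential formula applies; the rest is bookkeeping.
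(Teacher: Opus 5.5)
Your proposal is correct and follows the paper's route. You use the same split by root color and parity of black children, and the same grafting-onto-a-new-root derivation of the three equations (with the $t^{-1}$, $t$ corrections when the new root is black). Beyond that, you supply the $x$-adic contraction/uniqueness argument and the orbit--stabilizer passage to labelled colored trees, which the paper leaves implicit when it simply says the system ``can be solved iteratively.''

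The one thing to adjust is the weight. Use $t$ raised to the number of white vertices with an even number of black neighbours, with black vertices weighted $1$; this is what the paper's examples and the factor-free $\sT_{b}$ equation require. Reading $t^{|\alpha(c)|}$ literally would instead put a spurious factor $t$ on every black root.
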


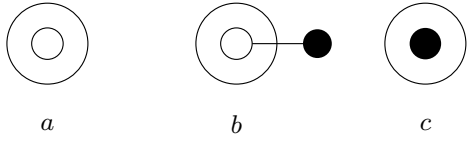
\begin{figure}
\centering
\resizebox{0.5\textwidth}{!}{
\begin{tikzpicture}[font=\small, line width=0.7pt]

\def\R{0.9}      
\def\r{0.35}     
\def\dx{4.2}     
\def\labY{-1.8}  

\begin{scope}[shift={(0,0)}]
  \draw (0,0) circle (\R);
  \draw (0,0) circle (\r);
  \node[scale=1.8] at (0,\labY) {$a$};
\end{scope}

\begin{scope}[shift={(\dx,0)}]
  \draw (0,0) circle (\R);
  \draw (0,0) circle (\r);
  \draw (\r,0) -- (1.5,0);
  \fill (1.8,0) circle (0.32);
  \node[scale=1.8] at (0,\labY) {$b$};
\end{scope}

\begin{scope}[shift={(2*\dx,0)}]
  \draw (0,0) circle (\R);
  \fill (0,0) circle (\r);
  \node[scale=1.8] at (0,\labY) {$c$};
\end{scope}

\end{tikzpicture}
}
\caption{Trees giving the initial vector \eqref{eq:initialvector}.
The roots are circled.\label{fig:initialvector}}
\end{figure}

\subsection{}
Up to trees with $6$ vertices, the series $\sT$ is
\begin{multline*}
\sT  = (t + 1)x + (t^2 + 3)x^2 + (3/2t^3 + 9/2t + 6)x^3 +
(8/3t^4 + 8t^2 + 16t + 16)x^4 \\
+ (125/24t^5 + 205/12t^3 + 35t^2
+ 1465/24t + 145/3)x^5 \\
+ (54/5t^6 + 157/4t^4 + 80t^3 + 335/2t^2
+ 240t + 3073/20)x^6 + \dotsb .
\end{multline*}
For example, the coefficient of $x^{2}$ corresponds to the graph
$P_{2}$ with its two rootings.  The coefficient of $x^{4}$ corresponds
to two trees, the path graph $P_{4}$ and the star graph $S_{4}$ on $4$
vertices.  The former contributes $(t^{4}+2t^{2}+8t+5)$ for each of
its $2$ rootings up to isomorphism; there are no nontrivial
automorphisms in this case.  The latter has $W$-polynomial
$t^{4}+6t^{2}+9$.  There are two different rootings of $S_{4}$ giving
two different automorphism groups.  If we root at a leaf we have
$|\Aut S_{4}| = 2$, since we can permute the remaining two leaves.  If we
root at the center we have $|\Aut S_{4}| = 6$, since we can permute
all the leaves.  This means we get a contribution of $(t^{4}+6t^{2}+9)
(1/2+1/6)$, and altogether get $8/3t^4 + 8t^2 + 16t + 16$.

Once one has $\sT$, one can construct $\sW_0$ using the analogue of \eqref{eq:W0}:

\begin{corollary} 
We have 
$$
\sW_{0} (x) = \int x^{-1}\sT (x)\,dx.
$$
\end{corollary}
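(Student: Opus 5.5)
The plan is to prove the equivalent differential identity $x\,\frac{d}{dx}\sW_{0}(x) = \sT(x)$, with the constant of integration fixed by $\sW_0(0)=0=\sT(0)$. Since both sides are exponential-type series weighted by inverse automorphism orders, the cleanest route is to pass to labeled objects. For a (unrooted) tree $T$ on $n$ vertices, the number of labelings on $\sets{n}$ is $n!/|\Aut T|$. Hence the coefficient of $x^{n}$ in $\sW_0$ equals $\frac{1}{n!}\sum W_{T}(t)$, the sum running over labeled trees on $\sets{n}$. Similarly, the coefficient of $x^n$ in $\sT$ is $\frac{1}{n!}\sum W_T(t)$ over labeled rooted trees on $\sets{n}$. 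This second claim uses only that the automorphisms counted for a rooted tree are those fixing the root, so the same orbit–stabilizer count applies.

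The key observation is that $W_T(t)$ depends only on the underlying graph $T$ and its colorings, not on the choice of root. A labeled rooted tree on $\sets{n}$ is a labeled tree together with one of its $n$ vertices. So the labeled rooted sum is exactly $n$ times the labeled unrooted sum, and the coefficients satisfy
\[
[x^n]\,\sT = n\,[x^n]\,\sW_{0}.
\]
This is precisely the coefficientwise statement of $\sT = x\sW_0'$, and integrating $x^{-1}\sT$ termwise (no $x^{0}$ term appears) gives the corollary. Equivalently, in unlabeled language one can argue directly with orbit–stabilizer. The rootings of $T$ up to isomorphism are the $\Aut T$-orbits on $V(T)$, and the orbit of $v$ has size $|\Aut T|/|\Aut(T,v)|$. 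Therefore
\[
\sum_{\text{rootings } v} \frac{1}{|\Aut(T,v)|} = \frac{v(T)}{|\Aut T|}.
\]
I would include this as the main computation, since it avoids any discussion of labeled species.

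The only point needing care is that $\sT$ in Theorem \ref{thm:loopnumber0} was defined as a sum over rooted trees of $W_T(t)$, and the decomposition into $\sT_w^{\pm},\sT_b$ plays no role here. We must also check that trees have no loops or multiple edges, so their automorphism groups are just vertex permutations and the orbit–stabilizer argument is valid. I expect no real obstacle. The mildest subtlety is making sure the root-preserving automorphism convention matches the stabilizer in the orbit count, which it does by definition.
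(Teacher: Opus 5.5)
Your proof is correct and takes the same route the paper has in mind. The paper gives no separate argument and simply invokes the analogue of Wright's identity \eqref{eq:W0}, which rests on the pointing computation $\sum_{v} 1/|\Aut(T,v)| = v(T)/|\Aut T|$, equivalently $\sT = x\,\frac{d}{dx}\sW_0$. You make explicit the two facts needed for that classical argument to carry over unchanged: $W_T(t)$ does not depend on the choice of root, and trees have no loops or parallel edges.
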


\subsection{}\label{ss:loopnumber1}
Next we consider the loop number $1$ case.  We will need a
generalization of the rational functions constructed in Theorem
\ref{thm:ratfamilies}.  In fact this arises as a simple generalization
of the $W$-polynomial.

\begin{definition}\label{def:fullW}
Let $G$ be a graph and let $c\in \sC (G)$ be a coloring.  We partition
the vertices $V (G)$ of $G$ as follows:
\begin{enumerate}
\item $B(c)$ is the subset colored black in $c$.
\item $W_{+} (c)$ is the subset colored white in $c$ with an
\emph{even} number of black neighbors.
\item  $W_{-} (c)$ is the subset colored white in $c$ with an
\emph{odd} number of black neighbors.
\end{enumerate}
Introduce variables $b, w_{+}, w_{-}$.  Then the \emph{full
$W$-polynomial} $\fullW_{G} \in \Z [b, w_{+}, w_{-}]$ is defined by 
\begin{equation}\label{eq:fullW}
\fullW_{G} = \sum_{c\in \sC (G)} \prod_{v\in B (c)} b \prod_{v\in
W_{+} (c)} w_{+} \prod_{W_{-} (c)} w_{-}. 
\end{equation}
\end{definition}

For example, $\fullW_{P_{2}} = b^{2}+2w_{-}b + w_{+}^{2}$.  Clearly 
$W_{G} (t)$ can be obtained from $\fullW_{G}$ through the
specialization $b\gets 1, w_- \gets 1, w_{+} \gets t$. 

We have the following generalization of Theorem \ref{thm:ratfamilies}:

\begin{theorem}\label{thm:rffull}
Suppose that $\sG$ is a rational family in Theorem \ref{thm:ratfamilies}.  Then the multivariate
generating function 
\[
\fullXi_{\sG} = \sum_{n\geq 1} \sum_{G\in \sG_{n}} \fullW_{G} (b,
w_{-}, w_{+})x^{n} \in \Z [b, w_{-}, w_{+}]\sets{x} 
\]
is a rational function in $x$.
\end{theorem}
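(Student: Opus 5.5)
We would rerun the transfer-matrix proof of Theorem \ref{thm:ratfamilies} with the full monomial weight in place of the $t$-weight. For a colored graph $S$, let $\tilde\mu_S\in\Z[b,w_+,w_-]$ be the monomial
\[
\tilde\mu_S=\prod_{v\in B(c)} b\prod_{v\in W_+(c)} w_+\prod_{v\in W_-(c)} w_- ,
\]
so that $\fullW_G=\sum_{c}\tilde\mu$. The digraph $D(\sG)$, the sets $\sS,\tilde\sS$, and the back and front operators are unchanged. Each edge from $v_S$ to $v_{S'}$ gets the weight $\tilde\mu_{\tilde S}/\tilde\mu_S$, which lies in the Laurent ring $R=\Z[b^{\pm1},w_+^{\pm1},w_-^{\pm1}]$. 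The entries of $v_0$ become the corresponding full monomials, or, in the extrusion, earring and subdivision cases, the restricted full polynomials $\fullW_{G,H,c(H)}$. Theorem \ref{thm:tmm} holds over any commutative ring $R$. Hence each $M_i(D,v_0)$ is rational in $x$ with denominator $\det(I-xA)$, and the expressions \eqref{eq:xicylin}, \eqref{eq:gf3}, \eqref{eq:finalsum} are then rational. Clearing powers of $b,w_\pm$ from the numerator and denominator gives coefficients in $\Z[b,w_+,w_-]$.

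The key point is the locality that justified the original induction. The type of a vertex (black, $W_+$ or $W_-$) depends only on its own color and the colors of its neighbors. So when a new cross-section (or a new subdivision vertex) is attached, only the vertices in the last one or two slices of the current window change type. This is exactly the set of vertices recorded in $S$, and the other vertices already carry their final labels. The induction therefore shows that the $S$-entry of $A^{k-2}v_0$ is the total full weight of the colorings whose last window is colored by $S$.

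The ratio $\tilde\mu_{\tilde S}/\tilde\mu_S$ must give exactly this change. The monomial $\tilde\mu_{\tilde S}$ covers every vertex of $\tilde S$, and $\tilde\mu_S$ covers the vertices of $S$. Dividing removes the old labels of the vertices in $S$ and inserts their new labels together with the new vertices' contributions. For this to work, the labels in $\tilde\mu_S$ must be computed in the same way as the state's stored contribution in the induction. Concretely, the labels of the boundary (front) vertices of the window are provisional, computed from the neighbors visible inside the window, and they get corrected at the next step. The original proof handles this for the $t$-weight, where a $t$ is created or destroyed, and the same bookkeeping applies verbatim with three labels instead of two.

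The main obstacle we expect is to check this bookkeeping in the torus and subdivision cases, where windows are non-contiguous ($\{1,2,4,5\}$) and the final closing step must also produce the correct labels. We would first choose the initial vector carefully: $v_0$ holds the full weights of the colorings of the base graph ($G\times C_4$, $S_4$, or $S_4(F)$), and the inserted vertex always lies between slices $2$ and $4$. With this choice every vertex's label is final except those in the window, and inserting the new vertex changes only the labels of slices $2$ and $4$ and adds the new vertex's own label. So each step multiplies by a well-defined ratio of monomials, and the sum over all entries at each stage gives $\fullW$ of the corresponding graph.
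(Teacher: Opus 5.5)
Your proposal is correct and matches the paper's argument: the paper's proof just says to rerun the transfer-matrix proof of Theorem \ref{thm:ratfamilies} with the monomials $\mu$ of \eqref{eq:monomials} replaced by the triple-product monomials of \eqref{eq:fullW}, leaving the details to the reader, and your locality discussion supplies those details. One refinement: the labels of the outer window vertices are computed from truncated neighborhoods in both $\tilde\mu_S$ and $\tilde\mu_{\tilde S}$ and simply cancel in the ratio, so what you actually need is that the vertices adjacent to the insertion point have all their neighbors inside the window (which also tells you, for instance, not to insert next to the slice glued to $G$ in the earring case).
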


\begin{proof}
The proof is a simple modification of the proof of Theorem
\ref{thm:ratfamilies}, so we leave the details to the reader.  The essential difference is
that now the monomial contributions $\mu$ in \eqref{eq:monomials}, which determine the entries
of the matrices $A$ and the vectors $v_{0}$, are
computed using the triple product in \eqref{eq:fullW} instead of the
single product in \eqref{eq:monomials}.
\end{proof}

For example, if $\sG$ is the cycle family $G_{n} = C_{n}$, the generating function $\fullXi $
is 
\begin{equation}\label{eq:fullxi}
\fullXi = \frac{(-2w_+^2 + 2w_-^2)bx^3 + 1}{(w_+^2 - w_-^2)bx^3 + (-b - w_+)x + 1}.
\end{equation}
Note that we have extended the cycle family to include $C_{1}$
(single vertex with a loop) and $C_{2}$ (two vertices, two parallel
edges), since we allow loops and parallel edge sets in this context.

\subsection{}
We are now ready to construct the loop number $1$ series $\sW_{1}$.
The construction is more complicated than that of $\sA_{1}$ since we
can't directly compose with the basic cycle generating function $\log (1/ (1-x))$.
Instead we proceed as follows.

We first take the series \eqref{eq:fullxi} and convolve with the
series 
\[
\frac{1}{2}\log \frac{1}{1-x} =  \frac{x}{2} + \frac{x^{2}}{4} + \frac{x^{3}}{6} + \dotsb  .
\]
This produces the series 
\begin{equation}\label{eq:s1}
 \sum_{n\geq 1} \fullW _{C_{n}} (b,w_{-},w_{+})\frac{ x^{n}}{2n}.
\end{equation}

Next we set $x\gets 1$ in \eqref{eq:s1} and regard it as a
multivariate formal power series in $\Q \sets{b, w_{-}, w_{+}}$.  Note
that we can do this since each polynomial $\fullW_{C_{n}}$ is
homogeneous of degree $n$.  Thus there are only finitely many terms of
bounded degree, and the expression makes sense as a multivariate
formal power series.  We denote the result by 
\begin{equation}\label{eq:s2}
\sZ = \sum_{n\geq 1} \frac{\fullW _{C_{n}} (b,w_{-},w_{+})}{|\Aut C_{n}|}.
\end{equation}
(This substitution would not be possible if we used the usual
$W$-polynomials and not the full $W$-polynomials.)

Now finally we graft rooted trees onto the cycles in \eqref{eq:s2},
i.e.~we plug in our generating functions $\sT_{b}$, $\sT_{w}^{-}$,
$\sT_{w}^{+}$ for the variables $b$, $w_{-}$, $w_{+}$.  Recall that
these are series in $x$ with coefficients polynomials in $t$ that
begin
\[
\sT_{b}= x + \dotsb , \quad \sT_{w}^{-} = x^{2} + \dotsb , \quad
\sT_{w}^{+} = tx + \dotsb .
\]
Thus after substitution we will end up with a power series in $x$ with
coefficients polynomials in $t$ that will be the desired
series $\sW_{1}$.  Note that this is the reason we get rid of all the $x$'s in
\eqref{eq:s2}: they are inserted back in by the tree series. We have to
carefully consider the effect of doing so on the parities of the white
vertices:
\begin{enumerate}
\item We can substitute $\sT_{b}$ in for the variable $b$, since $b$
marks a black vertex and $\sT_{b}$ has a black root.
\item We can replace $w_{+}$ with either $\sT_{w}^{+}$ or $\sT_{w}^{-}$.
No additional $t$ is needed for the root when substituting $\sT_{w}^{+}$ because
it is already included in $\sT_{w}^{+}$.
\item We can replace $w_{-}$ with either $\sT_{w}^{+}$ or
$\sT_{w}^{-}$.  This time in both cases we change the parity of the
root.  If we use $\sT_{w}^{+}$ then we must multiply by $1/t$, since
the even root is now odd after grafting.  If we use $\sT_{w}^{-}$ then
we must multiply by $t$, since the odd root is now even after
grafting.
\end{enumerate}

The final result is given by the following theorem:

\begin{theorem}\label{thm:loopnumber1}
In the series 
\[
\sZ = \sum_{n\geq 1} \frac{\fullW _{C_{n}} (b,w_{-},w_{+})}{|\Aut C_{n}|}.
\]
perform the following substitutions:
\[
b \gets \sT_{b}, \quad w_{+} \gets \sT_{w}^{+} + \sT_{w}^{-}, \quad w_{-} \gets t^{-1}\sT_{w}^{+} + t \sT_{w}^{-}.
\]
Then the result is the series $\sW_{1} (x)$.
\end{theorem}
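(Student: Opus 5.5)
The plan is a bijective decomposition of every graph in $\Gamma(1)$ into a cycle with rooted trees grafted on, followed by a check that the weights and automorphism factors agree term by term.

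\textbf{Decomposition.} Every $H\in\Gamma(1)$ has a unique cycle $C_n\subset H$, $n\ge 1$; here $C_1$ and $C_2$ are allowed, a loop and a double edge respectively. Deleting the $n$ cycle edges leaves a forest. Each component of this forest contains exactly one cycle vertex, so it is a rooted tree $T_v$ with root $v$. Conversely, an $n$-cycle together with rooted trees $T_1,\dots,T_n$ attached at its vertices determines $H$.

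\textbf{Automorphisms.} I would argue that $\sum_H \frac{1}{|\Aut H|}(\cdots)$ equals $\frac{1}{|\Aut C_n|}$ times the sum over ordered tuples $(T_1,\dots,T_n)$, each tree weighted by $1/|\Aut T_i|$ (rooted automorphisms). This is the standard orbit--stabilizer or labeled-structure argument behind \eqref{eq:W1}, using the same convention as Section~\ref{s:feynman}: $|\Aut C_1|=2$ from flipping the loop and $|\Aut C_2|=4$. Indeed $2n=|\Aut C_n|$ for all $n\ge1$, and this is exactly the factor in \eqref{eq:s1}. Since the identity is multiplicative and labeled, I would verify it at the level of exponential generating functions in $x$ rather than through explicit group actions. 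In effect, $\sZ$ evaluated at the tree series is the composition of the cycle species with rooted trees, where vertices of the cycle are marked by their colour/parity type.

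\textbf{Colourings and weights.} Fix a colouring $c$ of $H$. Its restriction to the cycle has a colour and a \emph{cycle parity} at each cycle vertex $v$, counting only black cycle-neighbours; this is the monomial appearing in $\fullW_{C_n}$. Its restriction to $T_v$ has root colour and \emph{tree parity}, counting only black children in $T_v$.

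The vertices of $H$ other than cycle vertices keep the same neighbourhood, so their contributions are exactly those counted in $\sT$.

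For a cycle vertex $v$, the true parity is the sum of the cycle parity and the tree parity. Its contribution to $W_H$ is:
\begin{itemize}
\item $1$ if $v$ is black;
\item $t$ if $v$ is white with even total parity;
\item $1$ if $v$ is white with odd total parity.
\end{itemize}
The series $\sT_w^{+}$ already includes a factor $t$ for its (even) root, and $\sT_w^-$ includes none. Hence:
\begin{itemize}
\item For a white cycle vertex of cycle parity even (marked $w_+$), the total parity equals the tree parity, so the correct substitution is $\sT_w^++\sT_w^-$.
\item For cycle parity odd (marked $w_-$), the parity flips. An even tree root becomes odd, which requires removing its $t$, giving $t^{-1}\sT_w^+$. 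An odd root becomes even, which requires adding a $t$, giving $t\sT_w^-$.
\item Black roots contribute $1$ regardless, and a black root's parity is irrelevant. Black cycle vertices affect the parity of their cycle neighbours, but that effect is already recorded in the cycle monomial. Black roots also affect the parity of their tree children, but that is already inside $\sT_b$. So $b\gets\sT_b$.
\end{itemize}

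\textbf{Conclusion and main obstacle.} Summing over colourings then factorises into the cycle monomial with these substitutions, giving $\sW_1$. Convergence of the substitution is fine, since each of the three substituted series has zero constant term and $\fullW_{C_n}$ is homogeneous of degree $n$. The main obstacle is the automorphism bookkeeping for small $n$ and for symmetric tree attachments. Here I would rely on the exponential-generating-function/species viewpoint: compute with vertex-labeled graphs, where weights are $x^{v}/v!$ and no automorphisms appear, and then pass to unlabeled counts at the end. In that setting the cycle factor $1/(2n)$ is just the count of cyclic orderings up to rotation and reflection, including the loop and parallel-edge conventions for $n=1,2$.
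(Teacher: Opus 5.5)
Your proposal is correct and follows essentially the same route as the paper. Both decompose each unicyclic graph as a cycle $C_n$ with rooted trees grafted at its vertices, weight it by $1/|\Aut C_n|=1/(2n)$, and justify $b\gets\sT_b$, $w_+\gets\sT_w^++\sT_w^-$, $w_-\gets t^{-1}\sT_w^++t\sT_w^-$ by the parity-flip reasoning you give. The paper's extra step of computing $\sZ$ from the rational function $\fullXi$, by convolving coefficientwise with $\frac12\log\frac{1}{1-x}$, is for computation only and is not needed for the identity.

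One small correction to your vertex-labeled remark. For $n=1,2$ there is only one vertex-labeled cycle, so the weights $1/2$ and $1/4$ come from dividing by the vertex-fixing automorphisms (flipping the loop, swapping the parallel edges), which labels do not see. Your orbit--stabilizer formulation with the full groups, $|\Aut C_1|=2$ and $|\Aut C_2|=4$, already handles this correctly.
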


The first few terms of $\sW_{1}$ are 
\begin{multline*}
\sW_{1} = (1/2t + 1/2)x + (3/4t^2 + 1/2t + 7/4)x^2 \\
+ (17/12t^3 +
1/2t^2 + 17/4t + 31/6)x^3 \\
+ (71/24t^4 + 3/4t^3 + 9t^2 + 73/4t +
131/8)x^4 + \cdots . 
\end{multline*}
For example, the coefficient of $x$ corresponds to the $1$-cycle of a
single vertex with a loop attached.  The $W$-polynomial is the same as
that of an isolated vertex, namely $t+1$, and we divide by $2$ to
account for the automorphism that flips the loop. The coefficient of
$x^{2}$ comes from two graphs: the $2$-cycle with $2$ vertices and $2$
parallel edges, which contributes $(t+1)^{2}/4$, and the $1$-cycle
with an edge attached, which contributes $(t^{2}+3)/2$.

\subsection{}\label{ss:loopnumber2}
Finally we come to the loop number $ g\geq 2 $ case.  The construction
of $\sW_{g}$ is very similar to that of $\sW_{1}$, so we will be brief.

We take the finite set of graphs $\Gamma_{\geq 3} (g)$.  For any $G\in
\Gamma_{\geq 3} (g)$, we consider the family $\sG = \sG (G) =
\{\sG_{n} \}_{n\geq 0}$ where $\sG_{n}$ is the set of all graphs
obtained by arbitrarily subdividing the edges of $G$ such that $n$ new
vertices have been added.  By \eqref{it:lastone} of Theorem
\ref{thm:ratfamilies}, for $G$ a simple graph the corresponding
(ordinary) generating function of $W$-polynomials is a rational
family.  It is easy to see that the same construction works when $G$
has loops and parallel edge sets, and when one incorporates
automorphisms to make the series exponential. Theorem \ref{thm:rffull}
then implies that the generating function of the full $W$-polynomials
is a rational function in $x$.  After the substitution $x\gets 1$ and
dividing by $|\Aut G|$ we obtain a multivariate generating function
\[
\sZ_{G} = \frac{1}{|\Aut G|}\sum_{n\geq 0} \sum_{H\in \sG_{n}}\fullW_{H} (b, w_{-}, w_{+}),
\]
which is well defined as a multivariate formal power series in $b, w_{-}, w_{+}$.

We can then obtain the contribution of $G$ to the series $\sW_{g}$ by
substituting into $\sZ_{G}$ the rooted tree series $\sT_{b},
\sT_{w}^{+}$, and $\sT_{w}^{-}$.  Similar considerations as in \S
\ref{ss:loopnumber1} apply, since we must be careful about the
destruction or creation of even white vertices.  The final result can
be stated as follows.

\begin{theorem}\label{thm:loopnumber2}
Let $g\geq 2$, and let $G\in \Gamma_{\geq 3} (g)$.  In the series 
\[
\sZ_{G} = \frac{1}{|\Aut G|}\sum_{n\geq 0} \sum_{H\in \sG_{n}}\fullW_{H} (b, w_{-}, w_{+})
\]
perform the substitutions
\[
b \gets \sT_{b}, \quad w_{+} \gets \sT_{w}^{+} + \sT_{w}^{-}, \quad w_{-} \gets t^{-1}\sT_{w}^{+} + t \sT_{w}^{-}
\]
and let $\sX_{G}(x) \in \QQ [t]\sets{x}$ be the result.
Then we have 
\[
\sW_{g} (x) = \sum_{G\in \Gamma_{\geq 3} (g)} \sX_{G} (x).
\]
\end{theorem}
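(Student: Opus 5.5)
The plan is to follow Wright's argument for \eqref{eq:Wgintro}, upgraded to track colorings. The proof has three parts: a decomposition of graphs, a bookkeeping of colorings and automorphisms, and the substitution step.

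\textbf{Step 1: decomposition.} Every $H\in\Gamma(g)$ with $g\geq 2$ decomposes uniquely as follows.
\begin{enumerate}
\item Let $H'$ be the $2$-core of $H$, obtained by repeatedly deleting leaves. It is nonempty since $g\geq 1$, has minimum degree $\geq 2$, and $H$ is $H'$ with a rooted tree grafted at each vertex of $H'$.
\item Let $G$ be the reduction of $H'$, obtained by suppressing the degree-$2$ vertices. Since $g\geq 2$, $G\in\Gamma_{\geq 3}(g)$, and $H'\in\sG_n(G)$ for some $n$.
\end{enumerate}
Conversely, choosing $G\in\Gamma_{\geq 3}(g)$, a subdivision $H'$ of $G$, and a rooted tree at each vertex of $H'$ produces every $H$. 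Working with labelled objects (exponential generating functions), the factor $1/|\Aut G|$ together with the sum over all subdivisions accounts correctly for $1/|\Aut H|$. This is the same orbit--stabilizer bookkeeping as in Wright's formula and Theorem \ref{thm:genfunallG}, so I will cite \cite{wright1,fs} for the uncoloured identity and check only that the colouring data is compatible.

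\textbf{Step 2: colourings factor through the core.} A colouring $c$ of $H$ restricts to a colouring of $H'$ and to colourings of each grafted rooted tree $T_v$, $v\in V(H')$. Admissibility is local, so we can sort vertices by where they sit.
\begin{enumerate}
\item A non-root vertex of $T_v$ has the same neighbourhood in $H$ as in $T_v$, so its contribution is already counted in $W_{T_v}$, that is, in $\sT$.
\item Take a root $v$. Its black-neighbour parity in $H$ is the sum of its parity in $H'$ and its parity among its children in $T_v$.
\end{enumerate}
Case analysis on the root, as in \S\ref{ss:loopnumber1}:
\begin{enumerate}
\item If $v$ is black, no condition arises, and $b$ is replaced by $\sT_b$.
\item If $v$ is white and even in $H'$ (variable $w_+$), the total parity equals the tree parity. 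The grafted tree contributes $\sT_w^+$, which already carries the factor $t$, or $\sT_w^-$ with weight $1$. So $w_+\gets\sT_w^++\sT_w^-$.
\item If $v$ is white and odd in $H'$ (variable $w_-$), the parity flips. This gives $w_-\gets t^{-1}\sT_w^++t\sT_w^-$.
\end{enumerate}
Summing over colourings of $H'$ weighted by $\fullW_{H'}$, and independently over the trees, yields exactly $W_H(t)x^{v(H)}$ summed over all $H$ with core $H'$. The power $x^{v(H)}$ is correct because each tree series carries $x^{v(T)}$, and every vertex of $H'$ is a tree root.

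\textbf{Step 3: well-definedness of the substitution.} By Theorem \ref{thm:rffull}, extended to multigraphs and exponential weighting as the text indicates, $\sZ_G$ is a well-defined power series in $b,w_\pm$. Each $\fullW_{H'}$ is homogeneous of degree $v(H')$, and the substituted series all have zero constant term in $x$. So the substitution converges $x$-adically, and summing over the finite set $\Gamma_{\geq 3}(g)$ gives $\sW_g$.

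\textbf{Main obstacle.} I expect the hardest step to be the automorphism bookkeeping in Step 1 when $G$ has loops and multiple edges, since subdividing may break symmetries that flip loops or swap parallel edges. I would handle it by labelling half-edges of $G$ and using the exponential formula, exactly as in Wright. This way $1/|\Aut G|$ times the labelled count reproduces $\sum 1/|\Aut H|$, and the colour weights, being isomorphism invariants, pass through unchanged.
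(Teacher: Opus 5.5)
Your proposal is correct and follows essentially the same route as the paper. The ingredients match: Wright's 2-core and reduction decomposition, grafting the coloured rooted-tree series with exactly the parity-corrected substitutions of \S\ref{ss:loopnumber1}, and the $1/|\Aut G|$ orbit--stabilizer bookkeeping, which the paper also only asserts for loops and parallel edges ("it is easy to see"). One small correction: your homogeneity argument in Step 3 already shows on its own that $\sZ_G$ and the substitution are well defined, so Theorem \ref{thm:rffull} is needed only for computing the series, not for this identity.
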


We compute a few terms of the series $\sW_{2}
(x)$ to illustrate the result.  In this case $\Gamma_{\geq 3} (2)$
consists of the three graphs $G_1$, $G_2$, $G_3$ shown in Figure
\ref{fig:gamma}; the orders of their automorphism groups are indicated
under each graph.

Up to three vertices we have
\begin{multline}\label{eq:geq2series}
\sW_{2} (x) = (1/8t + 1/8)x + (7/12t^2 + 1/2t + 5/4)x^2 \\
+ (101/48t^3 + 9/8t^2 + 101/16t + 175/24)x^3 + \dotsb 
\end{multline}

\begin{figure}
\centering
\resizebox{0.75\textwidth}{!}{
\begin{tikzpicture}[line cap=round,line join=round,thick]

\begin{scope}[shift={(0,2.2)}]
  \coordinate (a) at (0,0);
  \coordinate (b) at (6,0);

  \draw (a) -- (b);
  \draw (a) .. controls (2,1.4) and (4,1.4) .. (b);
  \draw (a) .. controls (2,-1.4) and (4,-1.4) .. (b);

  \fill (a) circle (3pt);
  \fill (b) circle (3pt);

  \node[below=18mm] at ($(a)!0.5!(b)$) {$|\mathrm{Aut}\,G_1|=12$};
\end{scope}

\begin{scope}[shift={(10,2.2)}]
  \coordinate (c) at (0,0);

  \draw (c) .. controls (-2, 2.2) and (-2,-2.2) .. (c);
  \draw (c) .. controls ( 2, 2.2) and ( 2,-2.2) .. (c);

  \fill (c) circle (3pt);

  \node[below=-8mm] at (0,-2.6) {$|\mathrm{Aut}\,G_2|=8$};
\end{scope}

\begin{scope}[shift={(5,-1.6)}]
  \coordinate (u) at (0,0);
  \coordinate (v) at (4,0);

  \draw (u) -- (v);

  \draw (u) .. controls (-2, 2.2) and (-2,-2.2) .. (u);

  \draw (v) .. controls ( 6, 2.2) and ( 6,-2.2) .. (v);

  \fill (u) circle (3pt);
  \fill (v) circle (3pt);

  \node[below=8mm] at ($(u)!0.5!(v)$) {$|\mathrm{Aut}\,G_3|=8$};
\end{scope}
\end{tikzpicture}
}
\caption{The graphs in $\Gamma_{\geq 3}(2)$.}\label{fig:gamma}
\end{figure}
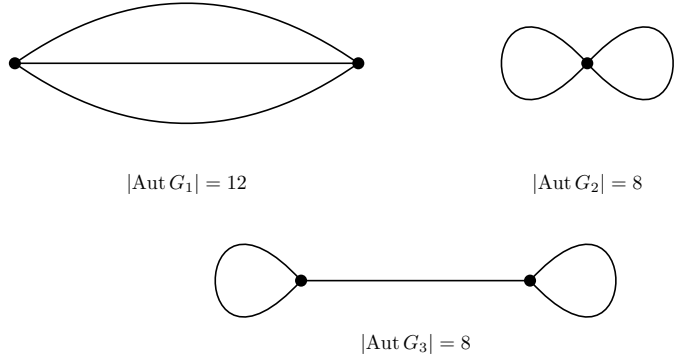

For example, consider the graph $G_2$.  This is the only $g=2$ graph
with one vertex, so it accounts for the $x$ term in
\eqref{eq:geq2series}.  The full $W$-polynomial is $b+w_+$, so we get
$(t+1)/8$. For the $x^{2}$ term, there are 4 graphs that contribute.
Two are $G_{1}$ and $G_{3}$; these contribute $ 1/12t^2 + 1/4$ and
$1/8t^2 + 3/8$ respectively.  The last two graphs come from
subdividing $G_{2}$ by adding a vertex, and by grafting a $P_{1}$ onto
the vertex of $G_{2}$.  These give respectively $1/4t^2 + 1/2t +
1/4$ and $1/8t^2 + 3/8$.  Adding these together gives $7/12t^2 + 1/2t + 5/4$.

\bibliographystyle{amsplain}
\bibliography{blackwhitegraphs}

\end{document}